\newtheorem{theorem}{Theorem}
\newtheorem{proposition}[theorem]{Proposition}
\newtheorem{claim}[theorem]{Claim}
\newtheorem{property}[theorem]{Property}
\newtheorem{corollary}[theorem]{Corollary}
\newtheorem{conjecture}[theorem]{Conjecture}
\newtheorem{lemma}[theorem]{Lemma}
\theoremstyle{definition}
\def\wtW{\widehat{W}}
\def\cC{\mathcal{C}}
\def\cU{\mathcal{U}}
\DeclareMathAlphabet{\pazocal}{OMS}{zplm}{m}{n}
\tikzset{partition/.style={fill,circle,inner sep=1pt},
         part/.style={baseline=0,scale=0.5,bend left=45},
         partlabel/.style={below}}
\tikzstyle{pnt}=[draw,ellipse,fill,inner sep=1pt]
\tikzstyle{opnt}=[ ]
\tikzstyle{pntt}=[draw,ellipse,fill,inner sep=0.5pt]
\tikzstyle{point}=[draw,ellipse,fill,inner sep=2pt]
\title[Bijections for Weyl Chamber walks]{Bijections for Weyl Chamber walks ending on an axis, using arc diagrams and Schnyder woods}
\author{Julien Courtiel, Eric Fusy, Mathias Lepoutre, and Marni Mishna}
\address{JC: LIPN, Universit\'e Paris 13, France; 
EF, ML: LIX, \'Ecole Polytechnique, France;
MM: Department of Mathematics, Simon Fraser University, Canada}
\begin{document}

\begin{abstract}{ 
In the study of lattice walks there are several examples of enumerative equi\-valences
which amount to a trade-off between domain and endpoint constraints. We present
a family of such bijections for simple walks in Weyl chambers which
use arc diagrams in a natural way. One consequence is a set of new bijections
for standard Young tableaux of bounded height. A modification of the
argument in two dimensions yields a bijection between Baxter
permutations and walks ending on an axis, answering a recent question
of Burrill et al. (2016).
Some of our arguments (and related results) are proved
using Schnyder woods.
Our strategy for simple walks extends to any dimension and
    yields a new bijective connection between standard Young tableaux
    of height at most $2k$ and certain walks with prescribed endpoints
    in the $k$-dimensional Weyl chamber of type D.  
}

{\textbf{Keywords: } Lattice paths, excursions, Schnyder woods, Dyck paths, Weyl
Chambers, Young Tableaux.} 
\end{abstract}
\maketitle

\section{Introduction}
\label{sec:introduction}
In the context of directed 2D lattice paths with unit steps, there is
a classic bijection between \emph{meanders\/} and \emph{bridges\/} of
equal length. This maps lattice walks with steps~$(1,1)$ and~$(1,-1)$
starting at the origin, staying above the $x$-axis (meanders) to those
ending at height zero (bridges) -- see Figure~\ref{fig:BasicBijection}.
\begin{figure}[ht]
\center
\includegraphics[width=\textwidth]{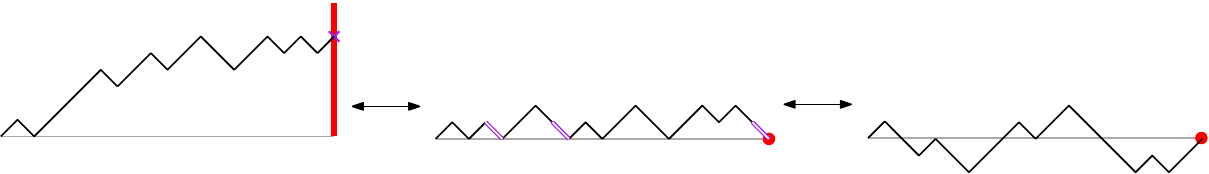}
\caption{An example of the classical bijection between meanders and bridges.
From a Dyck walk $D$ with some marked steps $d_1,\ldots,d_k$ reaching 
the $x$-axis, one gets (bijectively) a meander by 
turning every marked step into an up-step. 
To get (bijectively) a bridge from $D$, for $1\leq i\leq k$ 
we let $u_i$ be the up-step matched
with $d_i$, and we switch every step between $u_i$ and $d_i$ (included).}
 \label{fig:BasicBijection}
\end{figure}
This example illustrates a common trade-off in lattice walks between
domain constraints and endpoint constraints~\cite{Eliz15,
  BoMi10}
.  Note that the natural bijection shown in
Figure~\ref{fig:BasicBijection} proceeds via an intermediate class of
walks (Dyck walks) where both the stronger domain and endpoint restrictions are
imposed, and the elements of this class carry additional
``decorations'' (here, marked down-steps reaching the $x$-axis).

This paper introduces a similar but new strategy which can be successfully 
applied to several models of Weyl chamber walks. In particular, for two classical step sets (simple walks
and hesitating walks), we have found explicit bijections that 
exchange a domain constraint with an endpoint constraint.  In the two-dimensional case, these bijections match walks in the quadrant $\{x\geq 0,\ y\geq 0\}$ ending at the
origin (\emph{excursions}\footnote{More generally, we use the term excursion
  to indicate the set of walks with a prescribed start and end
  point. When they are not specified, the prescribed start and end is
  assumed to be the origin.}), and walks in the octant $\{x\geq y\geq 0\}$ and ending on
the $x$-axis (\emph{axis-walks}). For both step sets, these bijections pass through decorated excursions restricted to the octant. 
Deciding exactly how to mark the steps in the decorated intermediary
is less obvious than the Dyck walk example. We do this by using
\emph{open arc diagrams\/} (corresponding to partial
matchings and set partitions)
that are associated to the walks via the robust bijection of Chen
\emph{et al.}~\cite{ChDeDuStYa07} --- or more precisely, via its extension to open arc
diagrams due to Burrill~\emph{et al.\/}~\cite{BuCoFuMeMi15}. In their
full generality, these bijections map open arc diagrams with no
$(k+1)$-crossing\footnote{A $k$-crossing is a set of~$k$ mutually
  crossing arcs.} to walks in the $k$-dimensional Weyl chamber of type C $\{(x_1, \dots, x_k): x_1\geq \cdots\geq x_k\geq 0\}$ that end on the
$x_1$-axis, where the number of open arcs gives the abscissa of the
endpoint.  It is at the level of arc diagrams that the marking of the
object is easiest to describe: we map walks that end on the $x$-axis
to open arc diagrams, mark the location of the open arcs, remove them,
and then apply the inverse bijection to get marked excursions. The
schematic outline of our core idea is illustrated in
Figure~\ref{fig:remOpArc}.
The advantage of our approach is that it
very easily generalizes to walks in arbitrary dimension.

\begin{figure}
\centering
 \includegraphics[width=\textwidth]{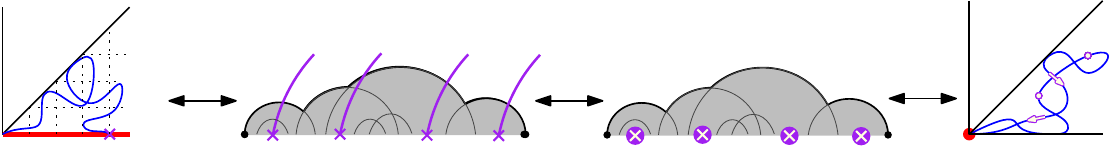}
 \caption{In the first part of our bijections for 2D walks, we map
   axis-walks in the octant to open arc diagrams with no $3$-crossing;
   we mark the positions of the open arcs, and then remove them; then we
   apply the inverse bijection on the resulting arc diagram with no
   open arcs, which yields an excursion with some markings.}
 \label{fig:remOpArc}
\end{figure}

Once these decorated excursions are obtained, it remains to process the marks.  
This processing is handled differently for
simple walks and for hesitating walks (where a further
step of transfer of decorations is needed), but in both
cases the marks are used to produce an unmarked walk in a larger
domain. 

Part of the bijection for simple walks has the nice feature
that it extends to higher dimension, unveiling a new bijective
connection with standard Young tableaux of even-bounded height
(which are known to be related 
to Weyl chamber axis-walks~\cite{Gouy86,BuCoFuMeMi15,Krat16}).




\subsection{Bijection for 2D simple walks}
A lattice model is said to be \emph{simple\/} if the step set consists
of all of the elementary vectors and their negatives. In two dimensions,
the steps correspond to the compass directions, that we denote $N, E,
S, W$.  Our first main result is the following Theorem, which is proved
in Section~\ref{sbs:mainOsc}.
\begin{theorem}
\label{thm:MainSimple}
There exists an explicit bijection (preserving the length) between
simple axis-walks of even length staying in the first octant, and
simple excursions staying in the first quadrant.
\end{theorem}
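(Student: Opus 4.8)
The plan is to realize the desired map as a composition of three bijections, following the schema of Figure~\ref{fig:remOpArc}. First, I would exploit that the simple steps in 2D are exactly the signed basis vectors $\pm e_1,\pm e_2$ (namely $E,W,N,S$), so that a simple walk is literally an oscillating-tableau walk and the octant $\{x\ge y\ge 0\}$ is precisely the type $C_2$ Weyl chamber, i.e.\ partitions with at most two rows. Invoking the bijection of Chen \emph{et al.}~\cite{ChDeDuStYa07}, as extended to open diagrams by Burrill \emph{et al.}~\cite{BuCoFuMeMi15}, a simple axis-walk of length $2n$ staying in the octant and ending at $(m,0)$ corresponds to an open arc diagram on $2n$ points with no $3$-crossing and exactly $m$ open arcs; the $m$ open arcs are the $E$-steps that build up the final length-$m$ first row and are never undone. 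Evenness of the length and the fact that the endpoint lies on the $x$-axis are exactly what make the oscillating-tableau reading consistent.

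Next I would make the \emph{mark-and-remove} step precise. Marking the $m$ openers of the open arcs and then deleting those arcs yields a diagram with no open arc; since deleting arcs can only decrease the crossing number, the $3$-crossing-free condition is preserved, and applying the inverse Chen \emph{et al.} bijection produces a simple octant excursion of length $2n-m$ ending at the origin, carried together with a record of the $m$ marked positions. The content to verify here is that the marks retain exactly enough information to reconstruct the open diagram, so that this map is a bijection onto a well-characterized family of \emph{decorated} octant excursions.

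The final and most delicate step is to \emph{process the marks}, unfolding the decorated octant excursion into a genuine quadrant excursion of length $2n$ and thereby trading the diagonal constraint $x\ge y$ for the weaker wall constraints $x\ge 0,\ y\ge 0$. In the spirit of the one-dimensional prototype of Figure~\ref{fig:BasicBijection}, where a marked matched pair triggers a local \emph{switch} of steps, each mark should trigger a reflection of a sub-walk across the diagonal $x=y$ (which swaps $E\leftrightarrow N$ and $W\leftrightarrow S$), together with the insertion of one boundary step, pushing the reflected portion into the region $y>x$ that the octant forbade. The $m$ insertions restore the length from $2n-m$ back to $2n$; and, the diagonal being fixed pointwise by the reflection, the endpoint remains the origin. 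The inverse folds every maximal portion of a quadrant excursion lying above the diagonal back down into the octant, recording each such portion as a mark.

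The hard part will be verifying that this unfolding is well-defined and bijective. One must show that the marked positions produced in the second step fall \emph{exactly} on the diagonal contacts where a reflection is legal, so that no mark is wasted and conversely every reflectable contact that ought to be marked is, and that the folding and unfolding operations are mutually inverse with exact length and endpoint bookkeeping. Reconciling \emph{which} positions the arc-diagram side chooses to mark against \emph{which} positions the walk side can legally reflect at is the crux where the two descriptions must be matched; I expect this to require a careful analysis of how the relative nesting and crossing structure of the open arcs translates into the diagonal (two-equal-rows) structure of the intermediate octant excursion. Once that correspondence is established, composing the three bijections and tracking the length through each (namely $2n \to 2n \to (2n-m)\,+\,m\ \text{marks}\to 2n$) yields the claimed explicit, length-preserving bijection.
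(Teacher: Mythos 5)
Your first two steps coincide with the paper's Lemma~\ref{lem:oscRemOp}: pass to an open matching diagram via Theorem~\ref{thm:BuCoFuMeMi}, mark and remove the open arcs, and return to a decorated octant excursion of length $2n-m$. The gap is in your third step, and it is not merely a verification you have deferred --- the geometry is wrong. By Property~\ref{ppt:cddsy}, the positions where open arcs can be inserted (hence where your marks land) are the visits of the excursion to the \emph{$x$-axis}, i.e.\ the boundary $y=0$ of the octant. Your proposed unfolding reflects across the \emph{diagonal} $x=y$, which is the other boundary. A diagonal unfolding (this is the paper's Lemma~\ref{lem:mirror}) requires marks on steps leaving the diagonal, and the equidistribution of ``marked $x$-axis contacts'' with ``marked diagonal departures'' is a genuinely non-trivial fact: for hesitating walks the paper must prove it as Proposition~\ref{prop:involution}, via the Elizalde--Rubey involution on non-crossing pairs of Dyck paths (or via Schnyder woods). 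So the ``crux'' you flag at the end is not a bookkeeping issue to be resolved by careful analysis of the arc structure; as stated, the marks simply do not sit where your reflection needs them.

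The paper's actual route for simple walks sidesteps this entirely by unfolding at the boundary where the marks actually live: the weighted $x$-axis visits dictate switches between the octant and its mirror image across $y=0$, producing a walk of length $2n$ in the tilted quadrant $\{x\geq 0,\ |y|\leq x\}$ from $(\tfrac12,\tfrac12)$ to itself (Lemma~\ref{lem:oscEnd}). The coordinate change $(x,y)\mapsto(x+y,\,x-y)$ then turns this into a pair of Dyck paths of lengths $2n$ and $2n+2$, and the final passage to quadrant excursions is the known bijection of Cori--Dulucq--Viennot and Bernardi between such pairs and simple quadrant excursions of length $2n$. If you want to keep your diagonal-reflection endgame, you would first need to establish an analogue of Proposition~\ref{prop:involution} for simple excursions in the octant, exchanging the weighted-$x$-axis-visit statistic with the steps-leaving-the-diagonal statistic; that is a real theorem, not a routine check.
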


As announced in the introduction, our strategy uses open arc diagrams
to turn the simple axis-walk of length $2n$ into a decorated
excursion. This is then transformed to a simple walk of length $2n$ in
the tilted quadrant $\{(x,y): x\geq 0, \ |y|\leq x\}$ starting and
ending at $(1/2,1/2)$, and finally mapped to a pair of Dyck paths of
respective lengths $2n+2$ and $2n$.  These are
known~\cite{CoDuVi86,Bern07} to be in bijection with simple excursions of length $2n$ in the quadrant. This chain of bijections is illustrated by Figure~\ref{fig:th1}.

\begin{figure}
 \includegraphics[width=0.95\textwidth]{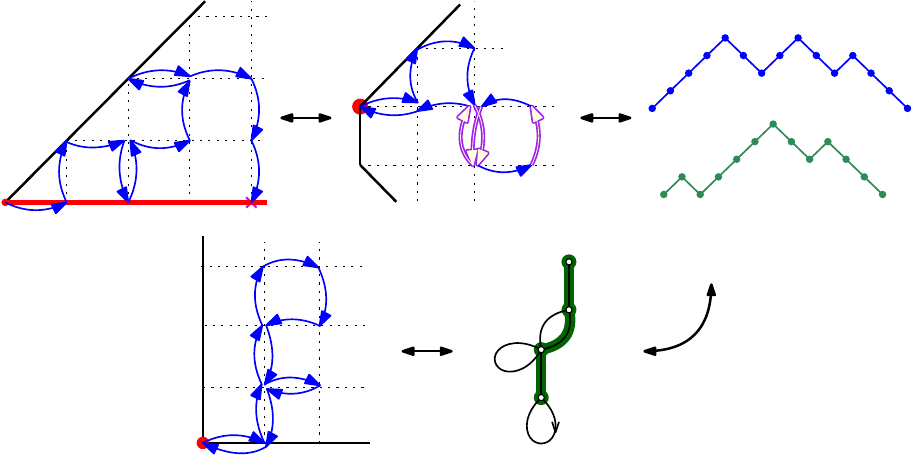}
 \caption{Illustration of the bijective proof of
   Theorem~\ref{thm:MainSimple}.  The top part connects simple axis-walks in the octant and pairs of Dyck paths, as explained by Theorem~\ref{theo:gouyou}.  The rest comes from the works of Bernardi~\cite{Bern07}, where he links pair of Dycks paths to simple excursions in the quadrant via the so-called \textit{tree-rooted planar maps}. Note that this bijection seems to lose track of the statistic \textit{"x-coordinate of the endpoint"}. }
 \label{fig:th1}
\end{figure}

Compare Theorem ~\ref{thm:MainSimple} to the following
result recently proved by Elizalde:
\begin{theorem}[Elizalde~\cite{Eliz15}]\label{thm:eliz}
  There exists an explicit bijection (preserving the length) between
  simple walks staying in the first octant and ending on the diagonal,
  and simple excursions staying in the first quadrant.
\end{theorem}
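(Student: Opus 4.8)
The plan is to reduce the statement to a correspondence between pairs of one-dimensional walks by means of the classical $45^\circ$ rotation $(x,y)\mapsto (u,v)=(x+y,\,x-y)$. Under this change of coordinates each of the four simple steps becomes a pair of independent $\pm 1$ increments, so a simple walk of length $2n$ is faithfully encoded by a pair $(U,V)$ of $\pm 1$ walks of length $2n$, where $U$ records the increments of $u$ and $V$ those of $v$. A direct computation shows that the octant $\{x\geq y\geq 0\}$ becomes $\{u\geq v\geq 0\}$, with the diagonal $x=y$ sent to $\{v=0\}$ and the $x$-axis $y=0$ sent to $\{u=v\}$, while the quadrant $\{x\geq 0,\ y\geq 0\}$ becomes the cone $\{u\geq |v|\}$. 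Thus both sides of the theorem translate into explicit families of coupled path-pairs, and the entire difficulty is concentrated in the coupling imposed by the domain.

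First I would describe the two families precisely. An octant walk ending on the diagonal corresponds to a pair $(U,V)$ starting at the origin in which $V$ is a Dyck path (it stays $\geq 0$ and returns to $0$, which forces the length to be even), and $U$ is a $\pm 1$ walk that stays weakly above $V$, i.e. $U_i\geq V_i$ for all $i$, with free endpoint. A quadrant excursion corresponds to a pair in which $U$ is a Dyck path of length $2n$ and $V$ is a bridge confined to the strip $|V_i|\leq U_i$. On the quadrant side the cited results~\cite{CoDuVi86,Bern07} already provide a bijection with pairs of Dyck paths of lengths $2n$ and $2n+2$; so it would suffice to connect the octant-diagonal family to the same objects, after which the theorem follows by transitivity.

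The heart of the argument is a bijection that exchanges the roles of the two components. In the octant-diagonal pair the excursion (Dyck) constraint is carried by $V$ while $U$ is the dominating walk with free endpoint, whereas in the quadrant pair it is $U$ that is Dyck and $V$ that is the strip-confined bridge. I would realize this exchange by a reflection/cycle-lemma decomposition taken \emph{relative to the lower boundary}, simultaneously closing the free endpoint of the dominating walk and relaxing the Dyck component to a bridge confined to the strip cut out by the new upper boundary. I expect this step to be the \textbf{main obstacle}: since the domination $U\geq V$ is not a product condition, the decomposition cannot be performed against a fixed line but must track the moving boundary $V$, and one must verify that the ``returns to $0$'' constraint is transferred bijectively from one component to the other with both the length and the pointwise domination preserved.

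As an alternative that stays closer to the methodology of this paper, one could instead try to mimic the arc-diagram pipeline used for Theorem~\ref{thm:MainSimple}: encode the octant-diagonal walk by an open arc diagram, mark and remove the open arcs, and apply the inverse bijection to obtain a decorated octant excursion. The obstacle in that route is that the bijection of~\cite{BuCoFuMeMi15} naturally produces walks ending on the $x_1$-axis (the $y=0$ wall of the Weyl chamber), whereas the diagonal $x=y$ is the opposite wall; making it work would require a variant of that bijection targeting the far wall of the chamber, which is precisely the boundary-swap that the coordinate approach above is designed to circumvent.
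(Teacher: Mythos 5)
Your reduction to pairs of $\pm 1$ paths via $(x,y)\mapsto(x+y,x-y)$ is correct and is exactly the encoding the paper also uses, and you have correctly located where all the difficulty sits: in the octant--diagonal family the lower component $V$ is Dyck and the upper component $U$ is free-ended, while in the quadrant family the roles are reversed ($U$ Dyck, $V$ a confined bridge), so one needs a bijection on coupled pairs that swaps these roles. But that swap is the entire content of the theorem, and your proposal does not supply it. A ``reflection/cycle-lemma decomposition taken relative to the lower boundary'' is not a construction: as you yourself note, the domination $U\geq V$ is not a product condition, so there is no fixed line against which to reflect or cyclically shift, and nothing in the proposal explains how the free endpoint of $U$ gets closed while the return-to-zero constraint migrates to $V$. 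Flagging this as ``the main obstacle'' is accurate, but it means the proof is missing its key step. (Your fallback via the arc-diagram pipeline is likewise abandoned at the analogous point, the far wall of the chamber.)

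For comparison, the paper closes this gap with Schnyder woods. It first uses Lemma~\ref{lem:mirror} to trade the quadrant excursion for an octant excursion with $k$ marked steps leaving the diagonal; in the pair-of-paths picture this is a non-crossing pair of Dyck paths with $k$ marked lower bounces. The role exchange you are looking for is then realized structurally: apply the inverse bijection $\Phi$ of Bernardi--Bonichon to obtain a Schnyder wood, move the root from $v_0$ to $v_1$ and reverse the orientation (which exchanges the trees $T_0$ and $T_1$), and apply $\Psi$ to return to a pair of paths. The parameter dictionary of Lemma~\ref{lem:refBeBo} shows that the $k$ lower bounces become $k$ steps on the last descent of the upper path; reversing those $k$ steps makes the upper path end at height $2k$, i.e.\ produces an octant walk ending at $(k,k)$ on the diagonal. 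If you want to complete your argument along your own lines, you would need either to reproduce this involution on non-crossing pairs (or an equivalent one, such as Elizalde's original construction in~\cite{Eliz15}), or to find a genuinely path-theoretic replacement for it; the coordinate change alone does not yield one.
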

In Section~\ref{sec:SchWd} we provide an alternative proof of
Theorem~\ref{thm:eliz} using Schnyder woods.  Note that
Theorems~\ref{thm:MainSimple} and~\ref{thm:eliz} together yield a
bijection for simple walks of length $2n$ staying in the octant,
mapping those ending on the $x$-axis to those ending on the diagonal.
This answers an open question of Bousquet-M\'elou and
Mishna~\cite{BoMi10}.

Moreover, in Section~\ref{sec:young} we give an extension for
dimension $k\geq 1$ of the aforementioned bijection between simple
axis-walks in the octant and simple walks from
$(\frac 1 2, \frac 1 2)$ to itself in the tilted
quadrant. 
This yields a new bijective connection between standard Young tableaux
of height at most $2k$ and simple excursions in the $k$-dimensional
Weyl chamber of type D.

Grabiner and Magyar gave explicit enumeration formulas for excursions
in Weyl Chambers, and hence this bijection permits a straightforward
application of their results. In Section~\ref{sec:Gessel} we use their results
to illustrate a new derivation of Gessel's formulas for standard Young
tableaux of even-bounded height.

\subsection{Bijection for 2D hesitating walks.}
A (2-dimensional) \emph{hesitating} walk is a sequence of steps
$s_1,\ldots,s_{2n}$ such that every step of odd index is either in
$\{N,E\}$ (a \emph{positive} step) or is $\mathbf{0} =(0,0)$, every step of
even index is either in $\{W,S\}$ (a \emph{negative step}) or is~$\mathbf{0}$, and
for every $i\in\{1,\ldots,n\}$, $s_{2i-1}$ and $s_{2i}$ cannot both be
zero. It is convenient to not represent the null step in the drawings,
but to group the steps by pairs of the form
$(s_{2i-1},s_{2i})$ instead. In Section~\ref{sbs:mainHes} we show the
analogous, although more difficult, result for hesitating walks, which
answers a recent question of Burrill \emph{et al.}~\cite{BuCoFuMeMi15}: 

\begin{theorem}
\label{thm:MainBaxter}
There exists an explicit bijection (preserving the length) between
hesitating axis-walks in the first octant, and hesitating excursions
in the first quadrant.
\end{theorem}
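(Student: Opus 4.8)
The plan is to follow the same three-stage architecture already set up for simple walks in Theorem~\ref{thm:MainSimple}, adapting each stage to the hesitating step set. First I would reduce the domain/endpoint trade-off to the open-arc-diagram picture of Figure~\ref{fig:remOpArc}. A hesitating axis-walk in the octant of length~$2n$ corresponds, via the open-arc-diagram bijection of Burrill \emph{et al.}~\cite{BuCoFuMeMi15} specialized to the Weyl chamber of type~C in dimension~$k=2$, to an open arc diagram with no $3$-crossing, where the number of open arcs records the $x_1$-coordinate of the endpoint (here an arbitrary point on the $x$-axis). The crucial feature is that the \emph{hesitating} step set is exactly the one whose Chen--Deng--Du--Stanley--Yan-style encoding produces genuine arc diagrams on a linear chord structure, so the type-C correspondence applies verbatim. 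I would then mark the positions of the open arcs, delete them, and apply the inverse bijection to the resulting (closed) arc diagram to obtain a hesitating \emph{excursion} in the octant carrying a set of marks.

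Next I would process these marks. This is the step where hesitating walks genuinely diverge from simple walks: as flagged in the excerpt, ``a further step of transfer of decorations is needed.'' So after obtaining a marked excursion in the octant, I would introduce an intermediate transfer map that relocates the decorations to a form amenable to a domain-enlarging operation, analogous to the way the simple-walk bijection passes to the tilted quadrant $\{x\geq 0,\ |y|\leq x\}$. Concretely, I expect each mark to encode a local reflection or step-switch (in the spirit of the classical meander-to-bridge switch in Figure~\ref{fig:BasicBijection}), and the transfer step should reorganize the marks so that applying all switches simultaneously sends the octant excursion to a hesitating walk in the full quadrant $\{x\geq 0,\ y\geq 0\}$ with endpoint at the origin, i.e.\ a hesitating excursion. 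Length preservation is automatic throughout, since every map in the chain is step-for-step (arc-diagram encodings preserve the number of steps, and switches/reflections do not change the step count).

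To conclude I would verify bijectivity by exhibiting the inverse at each stage: the open-arc-diagram correspondence and its closed counterpart are already known to be bijections by~\cite{ChDeDuStYa07,BuCoFuMeMi15}, so the only genuinely new content is the invertibility of the mark-transfer and domain-enlarging step. I would check that from a hesitating quadrant excursion one can canonically recover the marks (typically by locating, for each $y$-coordinate level, the steps that must have been switched), undo the switches to return to a marked octant excursion, reinsert open arcs at the marked positions, and run the forward arc-diagram bijection to recover the axis-walk uniquely.

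The hard part will be the middle stage: controlling the mark-transfer and the domain-enlarging switch so that the result stays a valid \emph{hesitating} walk in the quadrant and so that the whole operation is reversible. For simple walks the parity/step-pairing structure is mild, but hesitating walks impose the rigid constraint that steps come in blocks $(s_{2i-1},s_{2i})$ with a positive step possibly followed by a negative step and never two null steps in a block; a naive reflection or switch can easily break this block structure or the confinement to $\{x\geq 0,\ y\geq 0\}$. I would expect to spend most of the effort proving that the transfer of decorations is compatible with the hesitating block structure and that, level by level, the switched steps can be unambiguously identified for the inverse map. This difficulty is precisely why the excerpt describes this theorem as ``analogous, although more difficult.''
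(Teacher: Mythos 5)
Your outer skeleton matches the paper's: arc diagrams to turn the axis-walk into a decorated octant excursion, then a processing of the marks, then an unfolding into the quadrant. But the middle stage, which you correctly identify as the hard part, is left as a hope rather than an argument, and the hope as stated does not match what is actually needed. After the arc-diagram step (which for hesitating walks uses open \emph{partition} diagrams with no \emph{enhanced} $3$-crossing, and where, by Property~\ref{ppt:cddsy}, the insertable positions are fixed or closing points corresponding precisely to $W$-steps of the excursion lying \emph{on the $x$-axis}), the marks you obtain sit on the $x$-axis. The unfolding into the quadrant (Lemma~\ref{lem:mirror}, the reflection principle across $y=x$) instead requires marks on steps \emph{leaving the diagonal}. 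These are two different boundaries of the octant, so no simultaneous ``local switch at the marks'' can directly perform the unfolding; what is needed is a proof that the statistic ``number of $W$-steps on the $x$-axis'' is equidistributed with (in fact, jointly symmetric to) the statistic ``number of steps leaving the diagonal'' over hesitating octant excursions of a given length, realized by an explicit involution.

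That involution (Proposition~\ref{prop:involution}) is the genuinely new content of the paper's proof and is absent from your proposal: one converts the hesitating excursion to a simple octant excursion with marked sailing points, maps it to a non-crossing pair of Dyck paths via $(x,y)\mapsto(x+y,x-y)$ (under which $W$-steps on the $x$-axis become upper bounces and steps leaving the diagonal become lower bounces), and then invokes the Elizalde--Rubey involution (or the Schnyder-wood argument of Theorem~\ref{thm:SchHesSym}) which preserves the upper path while exchanging upper and lower bounces. Without supplying this ingredient, or some substitute for it, your chain of maps does not close: the decorated excursion produced by stage one is not of the form that the domain-enlarging reflection can consume. Your concerns about preserving the hesitating block structure are legitimate but secondary; the primary gap is that you have not identified which two statistics must be exchanged nor how to exchange them.
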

It is then easy to derive   
 a bijection between Baxter permutations of size $n+1$ (known to be
in bijection with hesitating excursions of half-length $n$ 
in the quadrant)    
and open matching-diagrams with $n$ points and no enhanced 3-nesting 
(known to be in bijection with 
hesitating axis-walks of half-length $n$ in the  octant).  This 
answers a conjecture formulated by Burrill \emph{et al.}~\cite{BuElMiYe15}. 

In order to show Theorem~\ref{thm:MainBaxter},  
again the first step is to use the strategterny of
Figure~\ref{fig:remOpArc} to turn the axis-walks into decorated
hesitating excursions, where the decoration consists in marking some
W-steps on the $x$-axis. A further ingredient here is to turn the
decoration into marked steps leaving the diagonal, after which the
decorated excursions in the octant are known~\cite{BuCoFuMeMi15} to be
equivalent to hesitating excursions in the quadrant.
 
Hesitating excursions of half-length $n-1$ in the quadrant are known
to be counted by the Baxter numbers
$B_n=\frac{2}{n(n+1)^2}\sum_{k=1}^n\binom{n+1}{k+1}\binom{n+1}{k}\binom{n+1}{k-1}$.
Indeed, as shown by Burrill \emph{et al.}~\cite{BuCoFuMeMi15}, they are in easy bijection
with the classical Baxter family of non-intersecting triples of
directed lattice walks.
On the other hand it has been first shown in~\cite{XiZh09} (and more
recently in~\cite{BuCoFuMeMi15}) that hesitating axis-walks of
half-length $n$ in the octant are also counted by $B_{n+1}$.  Both of
these proofs involve an equality of generating functions, and
neither proof retains significant combinatorial intuition. Our result
is the first bijective proof that these walks are counted by
$B_{n+1}$.  Such a result is not obvious to find since the family of
hesitating axis-walks in the octant does not seem to be naturally
endowed with the classical (bivariate) symmetric generating tree
common to the Baxter families such as Baxter permutations, twin pairs
of binary trees, 2-oriented plane quadrangulations, and plane bipolar
orientations~\cite{dulucq1998baxter, AcBaPi06, FeFuNoOr11}. These
families share the same generating tree, and hence there exists a
``canonical'' bijection relating them ~\cite{BoBoFu09}. We cannot rely
on such a systematic bijective strategy here.

Theorem~\ref{thm:MainBaxter} can be extended to a similar kind of plane walks, namely vacillating walks, leading to some new enumerative results on such walks.

In the case of simple walks, we saw that both the excursions in the quadrant and the axis-walks in the octant are in bijection with the simple walks in the octant ending on the diagonal. Does this hold for hesitating walks? In fact, a computational enumeration (up to half-length $50$) suggests the following conjecture (where the 
conjectural part is the bijective link to the third family).

\begin{conjecture}\label{conj:diagonal_hesitating}
 The following families are in bijection: 
 \begin{itemize}
  \item hesitating excursions of length $2n$ in the quadrant,
  \item hesitating axis-walks of length $2n$ in the octant, 
  \item hesitating walks of length $2n$ in the octant, 
  ending on the \emph{thick diagonal} : 
$\{(n,n),n\in\mathbb{N}\}\bigcup\{(n+1,n),n\in\mathbb{N}\}$.
 \end{itemize}
\end{conjecture}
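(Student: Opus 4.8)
The plan is to reduce the conjecture to a single new bijection, exploiting what is already proved. By Theorem~\ref{thm:MainBaxter} the first two families — hesitating excursions of length $2n$ in the quadrant and hesitating axis-walks of length $2n$ in the octant — are already in length-preserving bijection, so it suffices to connect the third family (octant walks ending on the thick diagonal) to either of them; composing the new link with Theorem~\ref{thm:MainBaxter} then closes the three-way loop. This mirrors exactly the simple-walk situation, where Theorem~\ref{thm:MainSimple} supplies the axis--quadrant link and Theorem~\ref{thm:eliz} (Elizalde's theorem, reproved in Section~\ref{sec:SchWd}) supplies the diagonal--quadrant link. I would therefore aim for the hesitating analogue of Elizalde's theorem: an explicit, length-preserving bijection between hesitating octant walks ending on the thick diagonal and hesitating excursions in the quadrant.

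The encouraging point is that the underlying geometry is very clean. The quadrant $\{x\geq 0,\ y\geq 0\}$ is the octant $\{x\geq y\geq 0\}$ glued to its mirror image across the diagonal, and reflection across $x=y$ behaves perfectly for hesitating steps: it swaps $E\leftrightarrow N$ and $S\leftrightarrow W$ and fixes $\mathbf 0$, hence it preserves both the alternation ``odd step positive, even step negative'' and the ``not both zero'' constraint, whether applied to a full double step or to a single step within one. So at the level of the reflection principle / folding, the relation between quadrant excursions and thick-diagonal octant walks is transparent. The real task is to \emph{upgrade this folding to an explicit bijection}. Following the strategy of Section~\ref{sec:SchWd}, I would look for a combinatorial structure playing the role that Schnyder woods play for simple walks — here most plausibly the plane-bipolar-orientation / transversal-structure encoding of Baxter objects that already underlies hesitating excursions — and read both families off one and the same such object, the two readings differing only by the diagonal folding.

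The main obstacle, and the reason the statement remains only a conjecture, is twofold. First, reflection gives only a \emph{signed} identity, and making it bijective is precisely the content (and the difficulty) of this paper; unlike the simple case, no Schnyder-wood analogue for the Baxter/hesitating world is immediately at hand, and even the intended $2$-oriented-quadrangulation structure must be shown to carry exactly the two desired readings. Second, the endpoint is a \emph{thick} diagonal, $x-y\in\{0,1\}$, rather than a single line: this two-line target is forced by the double-step structure, since the position reached between the positive step $s_{2i-1}$ and the negative step $s_{2i}$ of a pair may leave the diagonal, so the parity of $x-y$ at the end is not fixed. The folding rule must therefore treat both diagonals on an equal footing — for instance by the correct splitting of the final double step and of diagonal contacts occurring mid-pair. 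I expect locating the right combinatorial structure, together with this treatment of the last double step, to be where the genuine work lies, rather than in the global architecture. As a fallback milestone, one could first establish enumeratively, via a functional equation or a (possibly non-symmetric) generating tree, that thick-diagonal octant walks of half-length $n$ are counted by $B_{n+1}$, thereby confirming the conjecture before the bijective link is pinned down; the numerical agreement up to half-length $50$ makes this plausible.
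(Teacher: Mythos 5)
The statement you are asked to prove is, in the paper itself, an open conjecture: the authors explicitly say that the bijective link to the third family is the conjectural part, that it is supported only by computational enumeration up to half-length $50$, and that they have not even been able to prove the enumerative identity $u_n+v_n=B_{n+1}$ (where $u_n$ and $v_n$ count octant walks ending on $\{x=y\}$ and $\{x=y+1\}$ respectively). Your proposal correctly reproduces the reduction that the paper already makes implicitly --- the first two families are in bijection by Theorem~\ref{thm:MainBaxter}, so only the link to the thick-diagonal family is missing --- and your diagnosis of why that link is hard (the reflection/folding argument is only signed, no Schnyder-wood analogue for the Baxter world is available, and the thick diagonal forces a delicate treatment of the half-steps inside a pair) is accurate and matches the authors' own assessment. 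But this is an accurate description of the obstacle, not a proof: no bijection is constructed, no functional equation is solved, and no counting argument is carried out.

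The concrete gap is therefore the entire substantive content of the conjecture. In particular, be careful with the claim that ``the relation between quadrant excursions and thick-diagonal octant walks is transparent'' at the level of the reflection principle: the paper's Lemma~\ref{lem:mirror} relates quadrant excursions to octant \emph{excursions with marked steps leaving the diagonal}, not to octant walks \emph{ending} on the (thick) diagonal, and even in the simple-walk case the diagonal-endpoint statement (Theorem~\ref{thm:eliz}) required a genuinely nontrivial argument (Elizalde's bijection, or the Schnyder-wood proof of Section~\ref{sec:SchWd}) rather than a folding. Your fallback suggestion of first establishing $u_n+v_n=B_{n+1}$ enumeratively is reasonable as a research direction, but the paper records that this too is unresolved, so it cannot be invoked as a known milestone. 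As written, the proposal should be presented as a plan of attack on an open problem, not as a proof.
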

If we denote by $u_n$ (resp. $v_n$) the  number of hesitating walks of length $2n$ in the octant ending on $\{x=y\}$ 
(resp. ending on $\{x=y+1\}$), then Conjecture~\ref{conj:diagonal_hesitating} would imply that $u_n+v_n$ equals
the Baxter number $B_{n+1}$. We have not been able yet to find a 
computational proof that $u_n+v_n=B_{n+1}$, but have numerically
checked that both $u_n$ and $v_n$ seem to be P-recursive 
(of order $2$). We have not found any natural subfamily of hesitating
axis-walks of length $2n$ (for instance with a parity
constraint on the ending point) that are counted by~$u_n$.  



\section{Open arc diagrams}
\label{sec:def}
Arc diagrams are a graphic representation of combinatorial structures
such as partitions or matchings, which enables a convenient visualization
of certain patterns, such as crossings. A \textit{partition diagram}
is defined for a set partition $\pi$ of $\{1,\dots,n\}$: draw $n$
points on a line, labeled from $1$ to $n$; for each (ordered) block
$\{a_1,\dots,a_k\}$ of $\pi$, we draw an arc from $a_i$ to $a_{i+1}$
for $1\leq i\leq k-1$. A \textit{matching diagram} is a partition
diagram where the underlying set partition is a matching (i.e. every
block has size $2$).

A point of a partition diagram can be an \emph{opening point}, if it
is the first point of a block of size $\geq 2$; a \emph{closing
  point}, if it is the last point of a block of size $\geq 2$; a
\emph{transition point}, if it is a non-extremal point in a block of
size $\geq 3$; or a \emph{fixed point}, if it is a block of size
$1$. A matching diagram only has opening and closing points.

\begin{figure}\center
\includegraphics[width=0.9\textwidth]{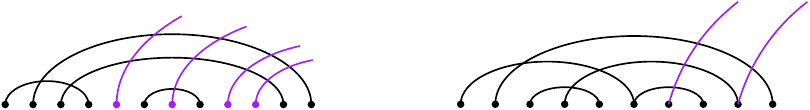}
\caption{Left: an open matching-diagram with $4$ open arcs.
Right: an open partition-diagram with $2$ open arcs.}
\label{fig:exArcDgm}
\end{figure}

A \emph{$3$-crossing} pattern in an arc diagram is a set of three
mutually crossing arcs, i.e. three arcs $(i_1,j_1)$, $(i_2,j_2)$ and
$(i_3,j_3)$ with $i_1 < i_2 < i_3 < j_1 < j_2 < j_3$. An
\textit{enhanced $3$-crossing} is a $3$-crossing where arcs sharing an
endpoint are also considered to be crossing. More formally, three arcs
$(i_1,j_1)$, $(i_2,j_2)$, $(i_3,j_3)$ form an enhanced $3$-crossing if
$i_1 < i_2 < i_3 \leq j_1 < j_2 < j_3$. These definitions are
naturally generalized to $k$-crossings for $k \geq 2$.

Arc diagrams can be extended to \textit{open arc diagrams}, by
allowing arcs with only a left endpoint, and no right endpoint (see
Figures~\ref{fig:exArcDgm} for examples). A $3$-crossing and an enhanced $3$-crossing can now include one open arc but only if this open arc starts as position $i_3$ (informally it is as if $j_3=+\infty$).
The two examples of Figure~\ref{fig:exArcDgm} have multiple $2$-crossings 
but no $3$-crossing.
For all types of arc diagrams the \emph{size} is defined as the number
of points in the diagram.

In~\cite{ChDeDuStYa07}, Chen \textit{et al.} describe a bijection
between arc diagrams with no $(k\!+\!1)$-crossings, and excursions staying
in the $k$-dimensional Weyl chamber of type $C$. 
It was subsequently extended in~\cite{Burr14,BuCoFuMeMi15} by
Burrill~\textit{et al.} to map open arc diagrams to axis-walks (see the left part of Figure \ref{fig:remOpArcHes} for an example).

\begin{theorem}[Burrill \emph{et al.}~\cite{BuCoFuMeMi15}, restricted
  to $3$-crossings]
 \label{thm:BuCoFuMeMi}
 There exists an explicit bijection between open
 matching (resp. open partition) diagrams of size $n$, with $m$ open
 arcs and no $3$-crossing (resp. no enhanced $3$-crossing), and simple
 (resp. hesitating) walks of length~$n$ (resp. of half-length $n$)
 staying in the first octant $\{(x,y), 0 \leq y \leq x \}$, starting
 at the origin and ending at $(m,0)$.
\end{theorem}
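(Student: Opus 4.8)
The plan is to obtain the statement by upgrading the bijection of Chen \emph{et al.}~\cite{ChDeDuStYa07} for closed diagrams, whose core property I would use as a black box: reading a diagram from left to right, their procedure builds a walk in the type-C chamber in which every opening point is an up-step (a cell is added to the current Young diagram) and every closing point is a down-step (a cell is removed), and --- this is the crucial refined feature --- at each position $i$ the number of \emph{rows} of the current shape equals the largest number of pairwise crossing \emph{active} arcs (those already opened but not yet closed at $i$), while the number of \emph{columns} equals the largest number of pairwise nesting active arcs. For a closed diagram every cell is eventually removed, so the walk returns to the origin, and ``no $(k+1)$-crossing'' is exactly ``at most $k$ rows throughout'', i.e. the walk stays in the $k$-dimensional chamber; the partition / enhanced-crossing case is the parallel statement for hesitating tableaux.

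Now I would run this same left-to-right procedure on an open diagram $D$ of size $n$ with $m$ open arcs, treating each open arc as ending at an imaginary point at $+\infty$, so that an open arc is an up-step that is never matched by a down-step. The walk then does not return to the origin but stops after $n$ steps at a shape $\lambda^{(n)}$, and I claim $\lambda^{(n)}=(m)$, so that the endpoint is $(m,0)$. Indeed, the active arcs at the final position are precisely the $m$ open arcs; by the convention that open arcs are pairwise non-crossing they contribute a single row, and since they are pairwise nesting (sharing the common imaginary right endpoint) they contribute $m$ columns, so by the refined property above $\lambda^{(n)}$ has one row and $m$ columns, namely $\lambda^{(n)}=(m)$. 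The same argument, with enhanced crossings and nestings, handles the partition case and lands the hesitating walk at $(m,0)$.

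Finally I would check that $D\mapsto\text{walk}$ is a bijection onto octant walks from the origin to $(m,0)$ and that the crossing constraint matches the domain constraint. For the latter, any hypothetical $3$-crossing of $D$ involves at most one open arc (two open arcs never cross), hence consists of mutually crossing active arcs at some position and forces three rows; conversely three rows at some step produce three mutually crossing active arcs, i.e. a $3$-crossing of $D$. Thus ``$D$ has no $3$-crossing'' is equivalent to ``the walk stays in the octant $\{0\le y\le x\}$'', and likewise for enhanced $3$-crossings and hesitating walks. Invertibility follows by closing the walk: append $m$ down-steps peeling off the single terminal row, apply the inverse of Chen \emph{et al.} to the resulting closed walk of length $n+m$ to recover a closed diagram, and reopen the $m$ arcs incident to the $m$ rightmost points. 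The main obstacle is precisely the faithful transport of this refined crossing/row dictionary through the $+\infty$ convention: I must verify that introducing the imaginary endpoints neither spuriously increases the height of any intermediate shape nor creates or destroys a mutually crossing triple, so that the $3$-crossing condition on $D$ and the octant condition on the walk coincide exactly; the enhanced-crossing variant for partitions will require the analogous, and slightly more delicate, check.
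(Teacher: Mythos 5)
The paper does not actually prove Theorem~\ref{thm:BuCoFuMeMi}: it is imported wholesale from Burrill \emph{et al.}~\cite{BuCoFuMeMi15}, and the text explicitly refers the reader there for the construction (RSK insertion, reformulable via growth diagrams~\cite{Krat06}). So the comparison is against that cited construction, and your sketch does follow it in spirit: open arcs become up-steps that are never matched, the terminal shape is the one-row partition $(m)$, and the crossing bound translates into the height bound on the intermediate shapes. One remark on your ``black box'': the local statement you rely on (at each position the number of rows/columns of the current shape equals the maximal number of pairwise crossing/nesting arcs straddling that position) is strictly stronger than the headline theorem of Chen \emph{et al.}, which only equates the crossing number with the \emph{maximum over all positions}. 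The local version is true, but it is really Krattenthaler's Greene-style theorem for growth diagrams; you should invoke it as such, since the whole argument (both the octant condition and the identification of the endpoint $(m,0)$) hinges on it.

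The genuine gap is in the partition/hesitating case, which you dismiss as ``the parallel statement.'' It is not parallel: in an open partition diagram the $m$ open arcs are not inserted at new positions to the right of everything --- as Property~\ref{ppt:cddsy} records, they are attached to \emph{existing} fixed or closing points of the diagram (turning, e.g., a fixed point or a closing point into one that simultaneously opens an open arc), and the relevant pattern is the \emph{enhanced} $3$-crossing, where arcs sharing an endpoint count as crossing. Your final-shape computation (open arcs pairwise nesting at $+\infty$, hence one row of length $m$) and, more seriously, your inversion recipe (append $m$ down-steps, apply the closed inverse, reopen the arcs at the $m$ rightmost points) both presuppose that the open arcs occupy their own rightmost positions; in the hesitating case each point corresponds to a \emph{pair} of steps and an open arc modifies the step-pair at an interior point, so the ``close the walk and peel off the last row'' inversion does not go through as stated. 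This part of the argument needs to be rebuilt around the enhanced-crossing conventions rather than checked as an afterthought. For the matching/simple-walk case your outline is sound, provided you carry out the verification you yourself flag, namely that completing the open arcs at imaginary points $n+1,\dots,n+m$ in nesting order creates no $3$-crossing beyond those already present under the stated convention, and that the resulting length-$(n+m)$ closed walk stays in the octant if and only if its first $n$ steps do (the last $m$ steps being harmless down-steps from $(m,0)$).
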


We refer the reader to the original paper~\cite{BuCoFuMeMi15} for a description of the
bijection in its complete form,  
which is based on the Robinson-Schensted insertion
algorithm on tableaux, but can also conveniently be reformulated in
terms of growth diagrams~\cite{Krat06,Krat16}.

Here are important properties of this correspondence that we use in
our bijections. The first part is extracted from Proposition 3 of~\cite{BuCoFuMeMi15}, and the second part follows straightforwardly from the description of the bijection.
\begin{property}
 \label{ppt:cddsy}
 Let $\pi$ be a closed matching (resp. partition) diagram of size~$n$
 with no $3$-crossing (resp. enhanced $3$-crossing), and $\omega$ the
 simple (resp. hesitating) walk of length $n$ corresponding to $\pi$
 via the bijection  due to Burrill \emph{et al.} ~\cite{BuCoFuMeMi15}.
 
 
 Open arcs can be inserted into intervals at positions
 $i_1,i_2,\dots,i_k$ in~$\pi$ without forming a $3$-crossing if and
 only if for every~$j \in \{1,\dots,k\}$ the~$y$-coordinate
 after~$i_j$ steps in~$\omega$ is zero.

 Given a partition diagram~$\pi$, the fixed points of~$\pi$ correspond
 to the factors~$EW$ in~$\omega$, and the closing points of $\pi$
 correspond to the factors $\{\mathbf{0} W, \mathbf{0} S\}$ in
 $\omega$ (with $\mathbf{0}$ denoting the zero step).  In addition, an
 open arc can be added on a fixed point or a closing point without
 creating an enhanced $3$-crossing if and only if an open arc could be
 added into the interval just to the left of that point without creating a $3$-crossing.
 \end{property}

\section{Proof of Theorem~\ref{thm:MainSimple}: Simple Walks}
\label{sbs:mainOsc}
The first main ingredient lies in the results
from~\cite{CoDuVi86,Bern07}, where the respective authors describe a
correspondence between simple excursions of length $2n$ in the
quadrant, and pairs of Dyck paths of lengths $2n$ and $2n+2$. To have
a bijective proof of Theorem~\ref{thm:MainSimple}, we then need to
connect such pairs of Dyck paths to simple axis-walks of even length in the octant.  This is given by the following theorem, with an
extension to odd length.

\begin{theorem}\label{theo:gouyou}
  Let $\cC_n$ be the set of Dyck paths of length $2n$, and let $\cU_n$
  be the set of simple axis-walks of length $n$ in the first octant.
   There is an explicit bijection for each
  $n\geq 0$ between $\cU_{2n}$ and $\cC_n\times\cC_{n+1}$, and between
  $\cU_{2n+1}$ and $\cC_{n+1}\times\cC_{n+1}$.
\end{theorem}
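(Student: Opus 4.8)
The plan is to prove Theorem~\ref{theo:gouyou} by composing three bijections: first passing from a simple axis-walk in the octant to a simple excursion in the octant carrying some distinguished marks (via open arc diagrams), then folding that decorated excursion into a single simple walk in the tilted quadrant $\{(x,y): x\ge 0,\ |y|\le x\}$ run from $(\tfrac12,\tfrac12)$ to itself in the even case (resp.\ to $(\tfrac12,-\tfrac12)$ in the odd case), and finally decomposing the tilted-quadrant walk into a pair of Dyck-type paths. I would present the last link first, since it is elementary and already fixes the target counts.

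For the final link I apply the change of coordinates $(u,v)=(x+y,\,x-y)$. Under this map the tilted quadrant $\{x\ge|y|\}$ becomes the standard quadrant $\{u\ge 0,\ v\ge 0\}$, the base point $(\tfrac12,\tfrac12)$ becomes $(1,0)$, and the four simple steps $E,W,N,S$ become the diagonal steps $(1,1),(-1,-1),(1,-1),(-1,1)$. The key point is that each step changes $u$ and $v$ \emph{independently} by $\pm1$, so a tilted-quadrant walk of length $2n$ splits canonically into its $u$-component (a $\pm1$ path of length $2n$ from $1$ to $1$ staying $\ge0$) and its $v$-component (a $\pm1$ path of length $2n$ from $0$ to $0$ staying $\ge0$). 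The $v$-component is exactly a Dyck path in $\cC_n$, and the $u$-component becomes a Dyck path in $\cC_{n+1}$ after prepending an up-step and appending a down-step; this yields the even case $\cU_{2n}\leftrightarrow\cC_n\times\cC_{n+1}$. In the odd case the endpoint $(\tfrac12,-\tfrac12)$ maps to $(u,v)=(0,1)$, and the same decomposition sends both components into $\cC_{n+1}$, giving $\cU_{2n+1}\leftrightarrow\cC_{n+1}\times\cC_{n+1}$.

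For the first two links I would use Theorem~\ref{thm:BuCoFuMeMi} and Property~\ref{ppt:cddsy}. An axis-walk of length $N$ ending at $(m,0)$ corresponds to an open matching diagram of size $N$ with $m$ open arcs and no $3$-crossing; by Property~\ref{ppt:cddsy} the open arcs sit precisely at intervals where the associated excursion is on the $x$-axis, so removing them (and their anchor points) produces a closed matching diagram, i.e.\ a simple excursion in the octant of length $N-m$, together with $m$ distinguished returns to the $x$-axis. The folding step then reflects across the $x$-axis, in direct analogy with the bridge construction of Figure~\ref{fig:BasicBijection}: working in the shifted coordinates (this is the role of the base point $(\tfrac12,\tfrac12)$, which guarantees the walk never sits exactly on the axis but crosses it between heights $\pm\tfrac12$), I switch $N\!\leftrightarrow\!S$ on the appropriate suffixes at the marked crossings, so that the octant $\{0\le y\le x\}$ and its mirror image glue along the axis into the full tilted quadrant $\{|y|\le x\}$. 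Note that the diagonal boundary $y=x$ and its reflection $y=-x$ become exactly the two boundaries of the tilted quadrant, so the domain constraint is automatically respected.

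The main obstacle is making this folding/unfolding a clean, length-preserving bijection and reconciling the two bookkeeping schemes. Concretely, each axis-crossing of the tilted walk is absorbed under reflection into one of the $m$ marks coming from a removed open arc, so the length drops by exactly $m$, matching the excursion length $N-m$; I must verify that the marked returns produced by Property~\ref{ppt:cddsy} are precisely those that can be consistently ``opened'' into below-axis portions, and that distinct mark sets give distinct tilted walks. The parity of $m$ (forced even when $N=2n$ and odd when $N=2n+1$) is what selects between the two target endpoints $(\tfrac12,\pm\tfrac12)$, and checking that the odd case leaves a single unmatched crossing — so that the net effect is to land at $(\tfrac12,-\tfrac12)$ — is the delicate point I would treat last.
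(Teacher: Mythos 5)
Your proposal follows essentially the same route as the paper: the arc-diagram step is exactly Lemma~\ref{lem:oscRemOp}, the folding of the marked excursion into the tilted quadrant based at $(\tfrac12,\tfrac12)$ and the subsequent splitting via $(u,v)=(x+y,x-y)$ are exactly Lemma~\ref{lem:oscEnd}, including the parity argument selecting the endpoint $(\tfrac12,\pm\tfrac12)$. The argument is correct and the details you defer (consistency of the $m$ marks with the sign-changing steps, and the odd case) are handled in the paper in the same way.
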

This section provides a bijective proof of this result, as illustrated
by Figure~\ref{fig:exOsc}. Gouyou-Beauchamps~\cite{Gouy86} showed that
the cardinality of simple axis-walks in the octant is indeed
$\mathrm{Cat}_n\mathrm{Cat}_{n+1}$ or $\mathrm{Cat}_{n+1}\ \!\!^2$,
depending on the parity, where $\mathrm{Cat}_n$ is the $n$th Catalan
number. However, his proof uses a reflection principle argument of
Gessel Viennot which
involves subtractions and cancellations of terms.


\begin{figure}\center
 \includegraphics[width=.95\textwidth]{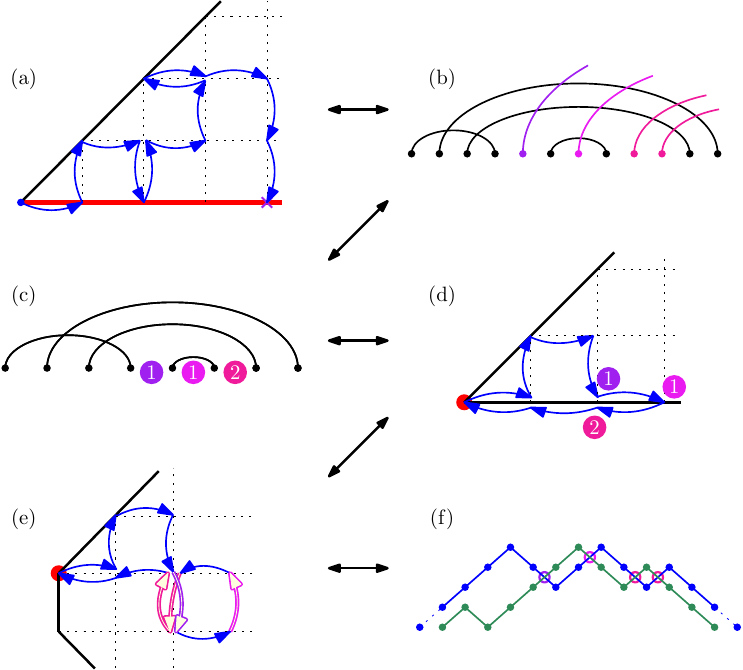}
 \caption{An example of the bijection of
   Theorem~\ref{theo:gouyou}. Successive objects are: (a) a
   simple axis-walk in the octant; (b) an open matching diagram with no $3$-crossing; (c) a
    matching diagram with no $3$-crossing and with integer weights at some intervals to record the positions of the former
open arcs; (d) a simple excursion
   in the octant where each visit to the $x$-axis carries
a nonnegative integer weight; (e) a simple excursion in the tilted quadrant; (f) a pair of
   Dyck paths whose lengths differ by $2$.}
 \label{fig:exOsc}
\end{figure}

As described in the introduction, our strategy relies on the bijection
 of Theorem~\ref{thm:BuCoFuMeMi}, which allows us to turn simple 
axis-walk into simple excursion with decorations consisting of
weights assigned to each visit to the $x$-axis.
\begin{lemma}
 \label{lem:oscRemOp}
 Simple axis-walks of length~$n$ in the octant ending at 
 $(m,0)$ are in bijection with simple excursions of length~$n-m$ in
 the octant, where each visit to the $x$-axis carries a non-negative
 integer weight, such that the sum of the weights is~$m$.
\end{lemma}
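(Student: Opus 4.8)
The plan is to use the open-arc-diagram bijection of Theorem~\ref{thm:BuCoFuMeMi} together with Property~\ref{ppt:cddsy} to strip away the open arcs in a controlled, reversible way. Starting from a simple axis-walk $\omega$ of length $n$ ending at $(m,0)$, I first apply the bijection of Theorem~\ref{thm:BuCoFuMeMi} to obtain an open matching-diagram $D$ of size $n$ with exactly $m$ open arcs and no $3$-crossing. The idea is then to remove the $m$ open arcs from $D$, recording, for each open arc, the position (interval between consecutive points, or before the first/after the last point) of its left endpoint. Removing an open arc deletes its single left endpoint, so the resulting object is a \emph{closed} matching-diagram $\pi$ on $n-m$ points with no $3$-crossing, together with a record of where the $m$ open arcs used to sit.

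The next step is to see that this record is exactly a weighting of the visits to the $x$-axis. Applying Theorem~\ref{thm:BuCoFuMeMi} to the closed diagram $\pi$ (now with $0$ open arcs) yields a simple excursion $\omega'$ of length $n-m$ in the octant ending at $(0,0)$. By Property~\ref{ppt:cddsy}, an interval at position $i$ in $\pi$ could have received an open arc without creating a $3$-crossing precisely when the $y$-coordinate after $i$ steps in $\omega'$ is zero, i.e. precisely at the visits of $\omega'$ to the $x$-axis. Thus each recorded left-endpoint position corresponds to a visit of $\omega'$ to the $x$-axis, and I assign to each such visit the number of open arcs whose left endpoint was recorded at that position. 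This produces a nonnegative integer weight on each $x$-axis visit, and the weights sum to the total number of removed open arcs, namely $m$, as required.

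For the reverse direction I start from a weighted simple excursion $\omega'$ of length $n-m$ whose $x$-axis-visit weights sum to $m$, convert $\omega'$ to the closed no-$3$-crossing matching-diagram $\pi$ via Theorem~\ref{thm:BuCoFuMeMi}, and reinsert open arcs: at the interval corresponding to each weighted visit I insert as many open arcs as the weight prescribes. Property~\ref{ppt:cddsy} guarantees that inserting open arcs exactly at positions where the $y$-coordinate is zero never creates a $3$-crossing, so the result is a legitimate open no-$3$-crossing diagram of size $(n-m)+m=n$ with $m$ open arcs; applying Theorem~\ref{thm:BuCoFuMeMi} in the forward direction recovers a simple axis-walk of length $n$ ending at $(m,0)$. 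The two maps are mutually inverse by construction, so this is a bijection preserving the stated parameters.

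The main subtlety, and the step I would treat most carefully, is the bookkeeping when several open arcs share the same insertion interval, together with verifying that ``removing open arcs'' and ``reinserting open arcs'' are genuinely inverse at the level of diagrams rather than merely at the level of walks. Concretely, I must check that inserting $w$ open arcs into a single interval is well defined and reversible (open arcs at the same interval are mutually non-crossing, so their relative order is immaterial and the multiset is recovered simply by counting), and that Property~\ref{ppt:cddsy} applies simultaneously to all chosen insertion positions, not just one at a time --- this is exactly the content of the ``if and only if for every $j$'' clause in the first part of Property~\ref{ppt:cddsy}, which licenses inserting the full collection of $m$ open arcs at once. Once these points are settled, the equality of lengths ($n-m$ versus $n$) and the sum-of-weights condition follow immediately from the count of open arcs.
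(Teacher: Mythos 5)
Your proposal is correct and follows essentially the same route as the paper: apply Theorem~\ref{thm:BuCoFuMeMi} to get an open matching diagram with $m$ open arcs and no $3$-crossing, delete the open arcs while recording their interval positions as weights, map the resulting closed diagram back to an excursion of length $n-m$, and invoke Property~\ref{ppt:cddsy} to identify the weight-bearing intervals with the visits to the $x$-axis. Your additional care about reinsertion being well defined when several open arcs share an interval, and about the simultaneous (``for every $j$'') form of Property~\ref{ppt:cddsy}, is a reasonable elaboration of points the paper leaves implicit.
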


\begin{proof}
  Using Theorem~\ref{thm:BuCoFuMeMi}, such an axis-walk is mapped to
  an open matching diagram of size $n$ with $m$ open arcs and without
  $3$-crossing. We then remove the open arcs along with their nodes
   to obtain a (closed)
  matching diagram $\pi$ of size $n-m$, and we record their former
  positions as follows: for each interval of $\pi$ that contained at
  least one open arc, we assign to the interval a positive weight
  equal to the number of open arcs it formerly contained (see
  Figure~\ref{fig:exOsc}(b) to (c)). The sum of these weights is thus
  $m$. Only specific
  intervals can carry weights since adding arcs in some intervals
  might create a 3-crossing.

  By Theorem~\ref{thm:BuCoFuMeMi} (again), the diagram $\pi$ is mapped
  to an excursion in the octant. By Property~\ref{ppt:cddsy}, we know
  that the intervals of $\pi$ where insertion of open arcs is possible
  exactly correspond to the visits of the excursion to the $x$-axis.
  We then transfer the weights to the corresponding positions
(see Figure~\ref{fig:exOsc}(c) to (d)). 
\end{proof}

As a final step, we transform the weighted excursions in the octant
into pairs of Dyck paths. To do so, we define an intermediary class
of walks in the tilted quadrant \[\widetilde Q = \left\{(x,y): x\geq 0,
    \ |y|\leq x\right\},\]. This domain is constituted of two copies of the octant, a positive one $\{(x,y): x\geq y\geq 0\}$, and an upside-down negative one $\{(x,y): x\geq -y\geq 0\}$. 

\begin{lemma}
 \label{lem:oscEnd}
 For $n,m$ both even (resp. both odd), the set of simple decorated
 excursions of length~$n-m$ in the octant with a total weight $m$ on
 the visits to the $x$-axis is in bijection with the set of simple
 walks of length $n$ in the tilted quadrant $\widetilde Q$ from
 $(\frac 1 2,\frac 1 2)$ to $(\frac 1 2,\frac 1 2)$ (resp. to $(\frac
 1 2,-\frac 1 2)$) where exactly $m$ steps change the sign of $y$ in
 the walk.  This set is in bijection with $\cC_{\lfloor
   (n+1)/2\rfloor}\times\cC_{\lceil (n+1)/2\rceil}$, in such a way
 that if the two Dyck paths are drawn with respective starting points
 $((0,0),(-1,0))$, they cross exactly $m$ times.
\end{lemma}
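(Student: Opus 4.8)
The plan is to decompose Lemma~\ref{lem:oscEnd} into two separate bijections, matching the two sentences of the statement, and to handle the parity constraints and the crossing-count statistic with care since these are the technical heart.

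\paragraph*{Step 1: from weighted octant excursions to tilted-quadrant walks.}
First I would set up the ``duplication'' map suggested by the definition of $\widetilde Q$. A weighted simple excursion of length $n-m$ in the octant is a walk staying in $\{x\geq y\geq 0\}$, together with nonnegative integer weights on its visits to the $x$-axis summing to $m$. The idea is to reflect the walk through the line $y=0$: a walk in $\{x\geq y\geq 0\}$ naturally lives in the upper half of $\widetilde Q$, and to recover full length $n$ I must inject the $m$ units of weight as actual steps. Concretely, I would record each weighted visit to the $x$-axis as an instruction to ``push the walk below the diagonal $y=0$'' for a prescribed number of steps, so that a weight of value $w$ at a given $x$-axis visit is converted into $w$ steps that cross the line $y=0$. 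This is where the translation by $(\tfrac12,\tfrac12)$ enters: starting the walk at $(\tfrac12,\tfrac12)$ rather than the origin places the lattice on half-integer coordinates, so that a step changing the sign of $y$ is unambiguous (the walk never sits on $y=0$). I would verify that the total number of sign-changing steps equals the total weight $m$, and that the length bookkeeping gives exactly $n$: the $n-m$ original steps plus the $m$ reflection/crossing steps. The endpoint parity falls out here: if $n$ and $m$ have the same parity, the walk returns to height $+\tfrac12$, giving endpoint $(\tfrac12,\tfrac12)$; the odd case lands at $(\tfrac12,-\tfrac12)$. The main subtlety is to make the reflection operation \emph{reversible}: given a walk in $\widetilde Q$ with marked sign-changing steps, I must be able to fold it back onto the octant and read off the weights, so I would define the inverse by folding the portions below $y=0$ upward and contracting the sign-changing steps into weights.

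\paragraph*{Step 2: from tilted-quadrant walks to pairs of Dyck paths.}
For the second bijection I would use the classical coordinate change that decouples a simple walk in $\widetilde Q$ into two independent one-dimensional paths. Writing a simple step $(\pm1,0)$ or $(0,\pm1)$ in terms of the diagonal coordinates $u = x+y$ and $v = x-y$, each simple step changes exactly one of $u,v$ by $\pm1$, so the walk projects onto two lattice paths $P_u$ and $P_v$ on $\mathbb{Z}$. The constraint $x\geq 0$ and $|y|\leq x$ defining $\widetilde Q$ translates, after the shift to $(\tfrac12,\tfrac12)$, into both $u$ and $v$ staying nonnegative (i.e.\ $u,v\geq \tfrac12$), which makes $P_u$ and $P_v$ into Dyck-type paths. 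Tracking lengths through the parity split gives the two paths lengths $2\lfloor (n+1)/2\rfloor$ and $2\lceil (n+1)/2\rceil$, i.e.\ they land in $\cC_{\lfloor (n+1)/2\rfloor}\times\cC_{\lceil (n+1)/2\rceil}$. I would check the boundary cases carefully to confirm the floor/ceiling assignment matches the endpoint $(\tfrac12,\pm\tfrac12)$.

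\paragraph*{The crossing statistic, and the main obstacle.}
The delicate part, and what I expect to be the main obstacle, is the final clause: when the two Dyck paths are drawn with starting points $(0,0)$ and $(-1,0)$, they cross exactly $m$ times. Under the coordinate change, a sign-changing step of the tilted walk is precisely a step where $y$ passes through $0$, which in $(u,v)$ coordinates corresponds to the heights of $P_u$ and $P_v$ becoming equal (since $y = (u-v)/2 = 0 \iff u = v$). With the $(-1,0)$ horizontal offset between the two drawn paths, an equality of heights of the underlying paths becomes a genuine geometric crossing of the two drawn curves. So the plan is to prove the combinatorial identity ``sign-change of $y$'' $\leftrightarrow$ ``the shifted Dyck paths cross,'' and to confirm the count is exactly $m$ rather than $m$ up to an off-by-one coming from the horizontal shift. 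I would argue this by a careful local analysis: examine the finitely many step-pairs $(\Delta u,\Delta v)$ that can occur at a time when $P_u$ and $P_v$ are at equal or adjacent heights, and check that each sign-changing step of the original walk contributes exactly one transversal intersection of the offset paths, while no other step does. Establishing this bijection between sign changes and crossings, with the shift chosen to turn height-coincidences into transversal crossings without double counting, is the crux; once it is in place, composing Steps~1 and~2 yields the full chain of bijections and the equality of the crossing statistic with $m$ follows immediately.
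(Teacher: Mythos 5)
Your proposal is correct and follows essentially the same route as the paper: the weights are converted into $y$-sign-changing steps that toggle between the two mirror copies of the octant inside $\widetilde Q$, and the change of coordinates $(x,y)\mapsto(x+y,x-y)$ then yields the pair of Dyck paths with the stated floor/ceiling lengths. Your local analysis of the crossing statistic (a sign change of $y=(u-v)/2$ forces exactly one transversal crossing of the offset paths, since $u_i-v_i=2y_i=\pm1$ at integer abscissas and the difference is linear in between) is in fact more explicit than what the paper records; the only slip is the parenthetical ``$u,v\geq\tfrac12$,'' which should be $u,v\geq 0$ since $u$ and $v$ are integers and $v$ starts at $0$.
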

\begin{proof}
  The idea is illustrated by
  Figure~\ref{fig:exOsc}\emph{(d)}--\emph{(f)}.
  
  First we translate the whole excursion by $(\frac 1 2,\frac 1 2)$.
  
  A weight on a position at height $\frac{1}{2}$ indicates the number of 
  \emph{switches} that are to be inserted at this position. The switches partition the path into a certain number of \emph{factors}. We apply the transformation $(x,y)\rightarrow (x,-y)$ to factors of even rank (where the first factor has rank $1$); and for each switch, we insert a vertical step crossing the $x$-axis in order to join the end of the preceding factor to the beginning of the next one.
  
Concerning the second bijection, we map any walk $(x_i,y_i)_{i\in
    \{0,\dots,n\}}$ of $\widetilde Q$ to the pair $P_1,P_2$ of
  paths \[\left((x_i+y_i)_{i\in \{0,\dots,n\}}, (x_i-y_i)_{i\in
      \{0,\dots,n\}}\right),\] i.e., the successive heights
of $P_1$ (resp. of $P_2$) are the successive values of $x_i+y_i$ 
(resp. of $x_i-y_i$). We easily see that the constraint of 
staying in $\widetilde Q$ is mapped to the constraint that both $P_1$ and $P_2$
remain nonnegative. In addition, for even length $2n$ 
 the endpoint conditions ensures that $P_1$ starts and
ends at $1$, while $P_2$ starts and ends at $0$; so $Q$ is a Dyck
path of length $2n$ and $P_1$ 
which can be identified with a Dyck path of length $2n+2$,
upon prepending an up-step and appending a down-step. For odd length
$2n+1$, the endpoint conditions ensure that $P$ starts at $1$ and 
ends at $0$, while $P_2$ starts at $0$ and ends at $1$; hence 
both $P_1$
and $P_2$ are identified with a Dyck path of length $2n+2$ (upon
prepending a down-step to $P_1$ and appending a down-step to $P_2$). 
\end{proof}
We
obtain the bijection for Theorem~\ref{theo:gouyou} by composing Lemma~\ref{lem:oscRemOp} with Lemma~\ref{lem:oscEnd}.

\section{Proof of Theorem~\ref{thm:MainBaxter}: Hesitating Walks}
\label{sbs:mainHes}
We next give the details of the bijection between hesitating
axis-walks in the octant, and hesitating excursions to prove
Theorem~\ref{thm:MainBaxter}. The initial part is similar to before,
and we provide two variants for the second.

\subsection{Transformation into decorated hesitating excursions in the octant}
The general strategy is the same as for simple walks: turn an
axis-walk in the octant into an open partition diagram, remove the open arcs while marking their locations, and transform the decorated diagram back to a decorated excursion. In contrast, the
second part is different from the simple walk case, since the marking does not easily induce an excursion in a larger domain. That is why we need an
additional step of decoration transfer. 

\begin{lemma}
\label{prop:openHes2}
Hesitating walks of length $2n$ staying in the octant and ending at
$(m,0)$ are in bijection with hesitating excursions of length $2n$
staying in the octant in which $m$ W-steps on the $x$-axis have been
marked.
\end{lemma}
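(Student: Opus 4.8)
The plan is to follow exactly the template established for simple walks in Lemma~\ref{lem:oscRemOp}, now using the open-partition-diagram branch of Theorem~\ref{thm:BuCoFuMeMi} and the enhanced-$3$-crossing refinement of Property~\ref{ppt:cddsy}. First I would take a hesitating axis-walk of length $2n$ (half-length $n$) in the octant ending at $(m,0)$, and apply Theorem~\ref{thm:BuCoFuMeMi} to obtain an open partition diagram of size $n$ with exactly $m$ open arcs and no enhanced $3$-crossing. I then remove the $m$ open arcs to produce a closed partition diagram $\pi$ of size $n-m$ with no enhanced $3$-crossing, recording, for each point that carried open arcs, the number of open arcs formerly attached there. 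By Theorem~\ref{thm:BuCoFuMeMi} applied to $\pi$, the closed diagram corresponds to a hesitating excursion of half-length $n-m$ in the octant.

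The crucial difference from the simple case is that I must keep track of \emph{where} open arcs can legally be inserted, and translate this into marked $W$-steps on the $x$-axis. Here I would invoke the second part of Property~\ref{ppt:cddsy}: open arcs in a partition diagram may only be added on fixed points or closing points (adding one elsewhere would create an enhanced $3$-crossing), and such an insertion is admissible precisely when the corresponding interval insertion is admissible in the matching sense, i.e.\ when the $y$-coordinate of the associated excursion returns to zero. Since Property~\ref{ppt:cddsy} further identifies the fixed points and closing points of $\pi$ with the factors $EW$ and $\{\mathbf{0}W,\mathbf{0}S\}$ of the hesitating walk $\omega$, the admissible insertion sites are exactly the $W$-steps of $\omega$ that land on the $x$-axis. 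The key point is that each such $W$-step is either the second half of a factor $EW$ (from a fixed point) or the second half of a factor $\mathbf{0}W$ (from a closing point whose associated negative step is $W$ rather than $S$); in both cases the step is a $W$-step arriving on the $x$-axis, so the sites where we recorded weights correspond bijectively to a subset of the $W$-steps on the axis. Recording a weight of $1$ at each former open-arc location then amounts to marking $m$ such $W$-steps.

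The main subtlety, and the step I expect to need the most care, is reconciling the \emph{multiplicity} of open arcs at a single point with the \emph{at-most-one} nature of marking a given $W$-step. In the simple-walk lemma the intervals carried nonnegative integer weights summing to $m$ because several open arcs could be inserted into the same interval; but the statement of Lemma~\ref{prop:openHes2} marks $m$ distinct $W$-steps, suggesting that in the hesitating/enhanced setting at most one open arc can be inserted per admissible site, or equivalently that distinct open arcs are recorded at distinct points. I would verify this by a close reading of the insertion rule: in the enhanced-$3$-crossing regime an open arc inserted on a fixed or closing point modifies that point, so inserting a second open arc at the ``same'' location actually creates a fresh site one position over, making the correspondence between open arcs and marked $W$-steps one-to-one rather than weighted. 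Establishing this multiplicity-one property rigorously from the growth-diagram description of the bijection is the real content; once it is in place, the map $\text{axis-walk}\mapsto(\text{excursion},\ m \text{ marked } W\text{-steps on the axis})$ is a bijection because every step is reversible — from a hesitating excursion with $m$ marked admissible $W$-steps one reconstructs $\pi$, reinserts one open arc at each marked site (legal by Property~\ref{ppt:cddsy}), and applies the inverse of Theorem~\ref{thm:BuCoFuMeMi} to recover the unique axis-walk ending at $(m,0)$. I would close by noting that length and the endpoint abscissa are preserved throughout, completing the proof.
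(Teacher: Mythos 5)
There is a genuine error in your bookkeeping of sizes and lengths, and it stems from importing the matching-diagram template of Lemma~\ref{lem:oscRemOp} too literally. In an open \emph{matching} diagram, each open arc occupies its own point, so deleting $m$ open arcs drops the size from $n$ to $n-m$ and the resulting excursion is shorter than the original axis-walk. In an open \emph{partition} diagram this is not the case: by the second part of Property~\ref{ppt:cddsy}, an open arc is grafted \emph{onto} an existing fixed point or closing point, so the $m$ open arcs contribute no extra points. Deleting them leaves a closed partition diagram still of size $n$, and Theorem~\ref{thm:BuCoFuMeMi} then produces a hesitating excursion of half-length $n$, i.e.\ of length $2n$ --- exactly as the statement of Lemma~\ref{prop:openHes2} requires. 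Your claim that $\pi$ has size $n-m$ and corresponds to an excursion of half-length $n-m$ contradicts the very statement you are proving (which asserts the excursion has length $2n$), and it is also inconsistent with your own later remarks that the open arcs are ``carried by'' points and that ``length \ldots is preserved throughout.'' The paper's proof runs the construction in the opposite direction but with the correct accounting: start from the excursion $\omega$ of length $2n$ and its closed partition diagram of size $n$, observe via Property~\ref{ppt:cddsy} that the fixed/closing points admitting an open arc are exactly those corresponding to $W$-steps of $\omega$ on the $x$-axis, mark $m$ of them to get an open partition diagram of size $n$ with $m$ open arcs and no enhanced $3$-crossing, and apply Theorem~\ref{thm:BuCoFuMeMi} to get the axis-walk of length $2n$ ending at $(m,0)$.

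The ``multiplicity'' issue you single out as the real content is in fact a non-issue once the above is set right: in a partition diagram each point is the left endpoint of at most one arc, so a fixed or closing point can carry at most one open arc, and the decoration is automatically a \emph{subset} of the $W$-steps on the axis rather than a weighting. No growth-diagram analysis is needed for this; it is built into the definition of (open) partition diagrams. Your identification of the admissible insertion sites with $W$-steps on the $x$-axis (via the factors $EW$ and $\mathbf{0}W$, the factor $\mathbf{0}S$ being excluded at height $0$) is correct and is precisely the argument the paper uses, so once the size/length error is repaired your proof collapses to the paper's.
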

\begin{figure}\center
 \includegraphics[width=\textwidth]{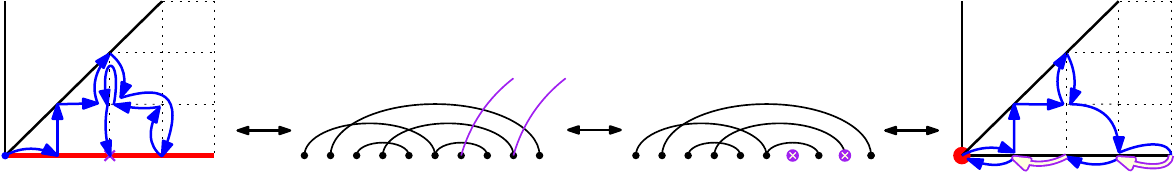}
 \caption{An example of the first part of the bijection.  From left to
   right: a hesitating axis-walk in the octant; an open partition
   diagram; a decorated partition diagram; a decorated hesitating
   excursion in the octant}
 \label{fig:remOpArcHes}
\end{figure}

\begin{proof}
  Using Property~\ref{ppt:cddsy}, it is easy to check that for $\pi$
  a partition-diagram of size $n$ and $\omega$ the corresponding
  hesitating excursion of length $2n$ in the octant, the (closing or fixed)  points
  of $\pi$ where an open arc can be added exactly
  correspond to the $W$-steps of $\omega$ on the $x$-axis. If we mark
  $m$ such steps we obtain an open partition diagram of size $n$ with
  $m$ open arcs and no enhanced $3$-crossing, which itself corresponds
  (by Theorem~\ref{thm:BuCoFuMeMi}) to an hesitating walk of length
  $2n$ in the octant that ends at $(m,0)$.
\end{proof}

It turns out that hesitating excursions in the quadrant (the second family of objects appearing in Theorem~\ref{thm:MainBaxter}) are also in bijection with \textit{some} decorated hesitating excursions in the octant, as stated by the following lemma.

\begin{lemma}
\label{lem:mirror}
Hesitating (resp. simple) excursions of length $2n$ in the first
quadrant are in bijection with hesitating (resp. simple) excursions of
length~$2n$ in the first octant with marked steps leaving the diagonal
$y=x$.
\end{lemma}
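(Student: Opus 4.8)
The plan is to realize the correspondence as a folding of the quadrant across the diagonal $y=x$, following the reflection principle. The reflection $\rho:(x,y)\mapsto(y,x)$ fixes the diagonal pointwise and exchanges the octant $\{x\geq y\geq 0\}$ with its mirror image $\{y\geq x\geq 0\}$, the two regions covering the quadrant. First I would check that $\rho$ acts on the step sets as an involution preserving each model: for simple walks it swaps $N\leftrightarrow E$ and $S\leftrightarrow W$; for hesitating walks it again swaps $N\leftrightarrow E$ among positive steps and $W\leftrightarrow S$ among negative steps while fixing $\mathbf{0}$, so positive steps stay positive, negative steps stay negative, and the constraint that $s_{2i-1},s_{2i}$ are not both $\mathbf{0}$ is preserved. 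In particular $\rho$ sends a valid walk to a valid walk of the same length, which gives length preservation for free.

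For the forward map, I would start from an excursion in the quadrant and cut it at its successive visits to the diagonal $x=y$. Between two consecutive diagonal visits the walk lies entirely on one side of the diagonal, so it decomposes into maximal arches, each either below the diagonal (already inside the octant) or strictly above it. I would then apply $\rho$ to every above-diagonal arch, bringing it into the octant, and leave the below-diagonal arches untouched; since the endpoints of the excursion lie on the diagonal, the result is again an excursion, now confined to the octant. The first step of each folded arch is a step that leaves the diagonal into the interior of the octant, and I would \emph{mark} exactly those steps. Thus the data of which arches were reflected is recorded precisely by a set of marked steps leaving the diagonal.

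The inverse map reads the marks: given an octant excursion with some steps leaving the diagonal marked, I would locate the maximal arch following each marked step (up to the next return to the diagonal) and apply $\rho$ to reflect it back above the diagonal, leaving unmarked arches in place. Since $\rho$ is an involution and the decomposition at diagonal visits is canonical, the two maps are mutually inverse, giving the claimed bijection. Injectivity and surjectivity both reduce to the observation that an arch leaving the diagonal downward can be unambiguously reflected upward and vice versa, and that every subset of steps leaving the diagonal is an admissible marking.

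The delicate point, and the one I expect to require the most care, is the hesitating case, because of the paired ``positive-then-negative'' structure of the steps. After the positive step of a pair the walk sits at an intermediate lattice point that may lie off the diagonal even when the pair begins and ends on it, so the naive decomposition into arches must be refined to respect step pairs and to pin down exactly which step counts as ``leaving the diagonal.'' I would handle this by performing the cut at the level of paired positions and verifying, using the structure of hesitating steps, that a maximal above-diagonal portion still begins with a well-defined leaving step whose reflection under $\rho$ is again a valid leaving step in the octant. Once this bookkeeping is set up, the folding argument goes through for both models; this is the form in which the statement is established in~\cite{BuCoFuMeMi15}.
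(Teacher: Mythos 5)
The paper does not actually prove this lemma: it cites \cite{BuCoFuMeMi15} and notes only that the statement ``can be seen as a consequence of the reflection principle with respect to the diagonal.'' Your folding argument is exactly that intended reflection-principle proof, and for \emph{simple} walks it is complete and correct: between consecutive diagonal visits the walk stays strictly on one side, the arches are in canonical bijection with the steps leaving the diagonal, and reflecting an arbitrary subset of them is a well-defined involution on the set of (excursion, marked subset) pairs.

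There is, however, a genuine gap in your treatment of the hesitating case, and your proposed fix points in the wrong direction. You suggest ``performing the cut at the level of paired positions,'' i.e.\ at even (pair-boundary) positions only; but then a portion between consecutive cuts need not lie on one side of the diagonal. Concretely, the pair $(E,S)$ applied at an above-diagonal point $(a,a+1)$ passes through the diagonal point $(a+1,a+1)$ after its positive step and ends at the below-diagonal point $(a+1,a)$, so a single pair can cross the diagonal while both of its boundary positions are off-diagonal (or on opposite sides). The correct refinement is the opposite of the one you propose: cut at \emph{every} visit to the diagonal, including those occurring after the odd (positive) step of a pair. This is harmless precisely because the reflection $\rho$ acts stepwise, sending $N\leftrightarrow E$, $S\leftrightarrow W$ and fixing $\mathbf{0}$: it preserves the positive/negative parity of each step individually, so reflecting a portion that begins or ends in the middle of a pair still yields a valid hesitating walk, and the ``not both zero'' condition on each pair is untouched. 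With that modification each arch is genuinely one-sided, its first step is a well-defined step leaving the diagonal, and the rest of your argument goes through verbatim.
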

 As proved in~\cite{BuCoFuMeMi15}, this lemma is a consequence of the reflection principle with
respect to the diagonal.

In terms of hesitating excursions of the quadrant, the number of marked steps corresponds to
a parameter called the \emph{switch-multiplicity} of the walk, which
is roughly speaking the number of times that the walk crosses the
diagonal, or similarly to the number of times the walk goes over the
diagonal.

\subsection{Moving the marks around}
In view of Lemmas~\ref{prop:openHes2} and~\ref{lem:mirror}, Theorem~\ref{thm:MainBaxter} holds if there is an equidistribution for the hesitating excursions of the octant
between the parameter counting the steps leaving the diagonal, and the
parameter counting~$W$-steps on the $x$-axis. This is true, and
furthermore, they are symmetrically distributed.

\begin{proposition}
\label{prop:involution}
  There is an explicit involution over the set of hesitating
  excursions of length $2n$ in the octant that exchanges the number of
  W-steps on the $x$-axis and the number of steps leaving the
  diagonal.
\end{proposition}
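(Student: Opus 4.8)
The plan is to realize the involution through a wall-swapping symmetry of the octant, while overcoming the fact that no step-preserving geometric symmetry does the job. The two relevant walls of the octant $\{(x,y):0\le y\le x\}$ are the $x$-axis $y=0$ and the diagonal $y=x$, and the natural wall-swapping lattice involution exchanging them is $\tau:(x,y)\mapsto(x,x-y)$. One checks immediately that $\tau$ is an involution, that it sends the wall condition $y=0$ to the condition $x=y$ and conversely, and hence that it exchanges the points where a walk lies on the $x$-axis with the points where it lies on the diagonal. The difficulty, and the reason the statement is nontrivial, is that $\tau$ does \emph{not} preserve the hesitating step set: for instance $E=(1,0)$ is sent to the displacement $(1,1)$, which increments both coordinates at once and is not a hesitating half-step. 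Thus $\tau$ cannot be applied step-by-step, and the involution we seek must be genuinely non-local.

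First I would lift both statistics to the arc-diagram side, using Theorem~\ref{thm:BuCoFuMeMi} and Property~\ref{ppt:cddsy}. By the argument already used in Lemma~\ref{prop:openHes2}, the $W$-steps of an octant excursion $\omega$ on the $x$-axis are exactly the (closing or fixed) points of the associated partition diagram $\pi$ at which an open arc may be inserted without creating an enhanced $3$-crossing; these record the contacts of $\omega$ with the first wall. The next step is to establish the companion dictionary for the second wall, identifying in the same language of $\pi$ which local patterns correspond to the steps of $\omega$ that leave the diagonal $y=x$. Since the two walls correspond to the two coordinates of the type-$C_2$ Weyl chamber (the $x$-axis to ``the second row of the shape is empty'' and the diagonal to ``the two rows of the shape are equal''), I expect the diagonal contacts to be governed by the nesting structure of $\pi$, dual to the crossing structure governing the $x$-axis contacts. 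The goal is then to exhibit an explicit involution on enhanced-$3$-crossing-free partition diagrams exchanging these two families of marked patterns; the natural candidate is the symmetry of the underlying growth-diagram (Robinson--Schensted) construction that transposes the roles of the two coordinates, realizing $\tau$ at the level of shapes even though it is unrealizable at the level of steps.

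As a more hands-on alternative, I would attempt a direct \emph{bump-transfer} involution on the walks themselves. Decomposing $\omega$ at its contacts with the two walls isolates elementary sub-excursions that hug the $x$-axis (built from $E\cdots W$ patterns contributing to the $x$-axis count) and elementary sub-excursions that hug the diagonal (contributing to the diagonal count); the involution would relocate such a sub-excursion from one wall to the other, exactly as the classical meander--bridge bijection of Figure~\ref{fig:BasicBijection} switches a marked segment. A small example already suggests the shape of the rule: the excursion $(E,\mathbf{0})(E,\mathbf{0})(\mathbf{0},W)(\mathbf{0},W)$, with two $W$-steps on the axis and one diagonal-leaving step, should be paired with $(E,\mathbf{0})(N,\mathbf{0})(\mathbf{0},S)(\mathbf{0},W)$, with one $W$-step on the axis and two diagonal-leaving steps, the outer $x$-axis bump of the former being transferred to a diagonal bump in the latter.

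The main obstacle, on either route, is well-definedness together with the exact statistic transfer. On the diagram route the crux is to show that the chosen growth-diagram symmetry preserves the class of closed partition diagrams with no enhanced $3$-crossing and carries the $x$-axis marked points bijectively onto the diagonal marked points. On the walk route the crux is to prove that the rerouting keeps every intermediate point inside the octant $\{0\le y\le x\}$, returns an excursion, is an involution, and transfers precisely one unit of the first statistic to the second at each relocated bump. I expect the containment-in-the-octant verification, coupled with proving that the transfer is exactly weight-preserving, so that the number of $W$-steps of $\omega$ on the $x$-axis equals the number of diagonal-leaving steps of its image, to be the technical heart of the proof.
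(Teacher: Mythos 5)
Your proposal is a plan rather than a proof: both routes you sketch stop exactly at the point where the work begins, and neither contains the idea that actually makes the paper's argument go through. On the arc-diagram route, you correctly recall (from Lemma~\ref{prop:openHes2} and Property~\ref{ppt:cddsy}) how the $W$-steps on the $x$-axis are read off the partition diagram, but you leave the companion dictionary for the diagonal-leaving steps as a guess (``I expect the diagonal contacts to be governed by the nesting structure''), and the ``natural candidate'' symmetry you invoke is problematic: the transpose/conjugation symmetry of the growth diagram exchanges crossings with nestings, so it does not even preserve the class of partition diagrams with no enhanced $3$-crossing, and you give no argument that any growth-diagram symmetry carries one marked family onto the other. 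On the walk route, you exhibit a single example and explicitly defer well-definedness, octant containment, the involution property, and the exact statistic transfer to ``the technical heart of the proof''; since the two statistics live on different walls and the step set is not symmetric under $(x,y)\mapsto(x,x-y)$, there is no reason a local bump relocation exists, and the paper's own solution is in fact global.

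What the paper actually does is reduce the problem in three steps that are absent from your proposal: (i) encode a hesitating excursion as a simple excursion in the octant with marked \emph{sailing points} (a positive step followed by a negative step), so that the hesitating structure can be recovered; (ii) map simple octant excursions to non-crossing pairs of Dyck paths via $(x,y)\mapsto(x+y,x-y)$, under which sailing points become upper peaks, $W$-steps on the $x$-axis become \emph{upper bounces} (common down-steps of the two paths), and diagonal-leaving steps become \emph{lower bounces}; (iii) invoke an involution on non-crossing pairs of Dyck paths that preserves the upper path (hence the upper peaks) and exchanges upper and lower bounces --- either the Elizalde--Rubey involution or the Schnyder-wood argument of Theorem~\ref{thm:SchHesSym}. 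Without step (i) you cannot return to a hesitating walk after applying any involution, and without steps (ii)--(iii) you have no candidate involution at all. As it stands, the proposal identifies the difficulty correctly but does not overcome it.
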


\begin{proof}
  The proof passes through four main intermediaries. We first map a
  hesitating walk to a simple walk, tracking enough information to
  recover the hesitating walk. We use a classic mapping of steps to
  convert a simple excursion to a pair of Dyck paths. Then, we apply an
  involution on pairs of Dyck paths which swaps a key parameter. The
  final deduction comes from tracing parameters through these
  bijections, and back. This gives the stated result.

  We now give the details of the individual steps.
  
\vspace{1em}
  
\renewcommand{\descriptionlabel}[1]{\hspace{\labelsep}\textbf{#1}.}
\begin{description}[leftmargin=0em, itemsep=2em]
\item[From hesitating walks to simple walks] 
We call a \textit{sailing point} a positive step, $E$ or $N$, followed by a
  negative step, $W$ or $S$.
We transform every
  hesitating walk into a simple walk in which some sailing
    points are marked. To do so, we gather the steps of
  the hesitating walk in pairs, discarding the zero-steps, and marking
  every sailing point induced by the gathering of two non-zero steps.
Thus, every hesitating excursion in the octant is identified
with a simple excursion in the octant where some sailing
points are marked.  
\item[From simple walks to pairs of Dyck paths] Simple excursions in
  the octant are mapped to non-crossing pairs of Dyck paths by to the
  transformation $(x_i,y_i)\rightarrow((x_i+y_i),(x_i-y_i))$.  The sailing points of the
  excursion become the \textit{upper peaks}, i.e. the peaks of the upper path; the W-steps on the
  $x$-axis become the \emph{upper contacts}, i.e. down steps occurring
  at the same time and the same height for both paths; and the steps
  leaving the diagonal become the \emph{lower contacts}, i.e. up steps
  leaving the $x$-axis in the lower path. The proof is now reduced to
  the following claim.
\item[Involution for non-crossing pairs of Dyck paths] 
  \begin{claim}
  \label{clm:inv}
  There is an explicit involution on pairs of non-crossing Dyck paths of length
  $2n$, which preserves the number of upper peaks and maps the number of
  upper contacts to the number of lower contacts.
  \end{claim}
  Such an involution is given by an article of Elizalde and
  Rubey~\cite{ElRu16}. More specifically, their involution operates on
  non-crossing pairs of Dyck paths, preserves the upper path (hence
  the upper peaks), and exchanges the number of upper and lower
  contacts. 
  
  In the next section, we also give an alternative proof of the previous claim, using Schnyder woods.
\end{description}
\end{proof}

\subsection{Bonus: Vacillating Walks}

In the literature (see \cite{ChDeDuStYa07,BuMeMi15} for example), there are commonly three families of walks related to tableaux: the simple and hesitating walks, which we have already dealt with, and vacillating  walks. 
A (2-dimensional) \emph{vacillating} walk is an even sequence of steps
such that every step of odd index is 
 $W$, $S$ or $\mathbf{0}$ (a negative or zero step), and every step of
even index is  
$N$, $E$ or $\mathbf{0}$ (a positive or zero step). 
Unlike hesitating walks, vacillating walks can have successive zero steps.

It turns out that an analogue of Theorems~\ref{thm:MainSimple} and~\ref{thm:MainBaxter} for vacillating walks can be seen as a corollary of Theorem \ref{thm:MainBaxter}.

\begin{corollary}
For every integer $n \geq 1$, the number of vacillating axis-walks of half-length~$n$ in the first octant is twice the number of vacillating excursions of half-length~$n$
in the first quadrant. The latter number is equal to
 $\sum_{k=0}^{n-1}\binom{n-1}{k}B_{k + 1}$, where $B_k$ is the $k$th Baxter number. 
\end{corollary}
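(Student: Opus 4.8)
The plan is to prove the corollary in two parts: first the factor-of-two relationship between vacillating axis-walks and vacillating excursions, and then the enumerative formula.

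\textbf{Relating vacillating walks to hesitating walks.} The first step is to understand how vacillating walks differ from hesitating walks. Both are even sequences whose odd-indexed steps are negative-or-zero and whose even-indexed steps are positive-or-zero; the only distinction is that a hesitating walk forbids a pair $(s_{2i-1},s_{2i})$ from being $(\mathbf{0},\mathbf{0})$, whereas vacillating walks permit consecutive zero steps. Thus a vacillating walk of half-length $n$ decomposes as a hesitating walk of some half-length $k \leq n$ together with a choice of which $n-k$ of the $n$ step-pairs are the $(\mathbf{0},\mathbf{0})$ double-zero pairs. Since inserting $(\mathbf{0},\mathbf{0})$ pairs does not move the walk, the domain and endpoint constraints (staying in the octant resp.\ quadrant, and ending on the axis resp.\ at the origin) are preserved under insertion/deletion of double-zero pairs. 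I would formalize this as a bijection between vacillating walks of half-length $n$ in a given domain with given endpoint constraint and pairs $(\omega, S)$ where $\omega$ is a hesitating walk of half-length $k$ in the same domain with the same endpoint constraint and $S \subseteq \{1,\dots,n\}$ with $|S|=n-k$ records the positions of the double-zero pairs.

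\textbf{The factor of two and the formula.} Applying this decomposition to both sides and using Theorem~\ref{thm:MainBaxter}, the vacillating axis-walks of half-length $n$ in the octant are counted by $\sum_{k} \binom{n}{n-k}\,a_k$ and the vacillating excursions of half-length $n$ in the quadrant by $\sum_{k}\binom{n}{n-k}\,e_k$, where $a_k$ (resp.\ $e_k$) is the number of hesitating axis-walks in the octant (resp.\ hesitating excursions in the quadrant) of half-length $k$. Theorem~\ref{thm:MainBaxter} gives $a_k = e_k$, so the two vacillating counts are \emph{equal} term by term, not merely off by a factor of two --- so care is needed here. The resolution must be that the correct enumeration parameter is not half-length but the slightly shifted index: the excerpt records that hesitating axis-walks of half-length $n$ in the octant are counted by $B_{n+1}$, and hesitating excursions of half-length $n-1$ in the quadrant by $B_n$. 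Hence $e_k = B_{k+1}$ counts quadrant excursions of half-length $k$, while the axis-walk count uses $a_k = B_{k+1}$ as well but the index alignment in the $\binom{n}{k}$-sum differs by one between the two models, producing the factor of two. I would therefore carefully track the index shift: the quadrant-excursion count becomes $\sum_{k=0}^{n-1}\binom{n-1}{k}B_{k+1}$, matching the claimed formula, and the axis-walk count becomes $\sum_{k=0}^{n}\binom{n}{k}B_{k+1}$; I would then verify algebraically (or via a direct combinatorial pairing on the double-zero positions) that the latter equals twice the former.

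\textbf{Main obstacle.} The principal difficulty, and the step I would scrutinize most, is exactly this index bookkeeping: getting the relationship between half-length and Baxter index right in each of the two vacillating models, so that the binomial sums differ by precisely the shift that yields a clean factor of two rather than an equality. Concretely, I expect the cleanest argument for the factor of two to be a direct combinatorial bijection: I would try to split vacillating axis-walks according to the value of the first step-pair (or the last visit structure on the axis) to exhibit two copies of the vacillating-excursion family, thereby bypassing the binomial identity and making the factor of two transparent. The formula for the excursions then follows immediately from the double-zero-insertion decomposition together with $e_k = B_{k+1}$.
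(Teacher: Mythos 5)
There is a genuine gap, and it sits exactly where you sensed trouble. Your core decomposition --- a vacillating walk of half-length $n$ equals a hesitating walk of half-length $k$ plus a choice of which $n-k$ of the $n$ step-pairs are double-zero --- is not correct, because the two models pair their steps in opposite orders. In a vacillating walk the pair $(s_{2i-1},s_{2i})$ is (negative-or-zero, positive-or-zero), whereas in a hesitating walk it is (positive-or-zero, negative-or-zero). Deleting the double-zero pairs of a vacillating walk therefore yields a \emph{reversed} hesitating walk, not a hesitating one, and for axis-walks in the octant these are not interchangeable. The paper's fix is the key idea you are missing: strip the first step (forced to be $\mathbf{0}$, since a walk at the origin cannot take a negative step) and the last step (which for an axis-walk is $E$ or $\mathbf{0}$, but for an excursion is forced to be $\mathbf{0}$), then re-pair the remaining $2n-2$ steps into $n-1$ pairs that now have the hesitating orientation, and only then remove the double-zero pairs, recording their positions in a multiset of size $n-1-k$ with elements in $\{0,\dots,k\}$ (giving $\binom{n-1}{k}$ choices). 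This single construction simultaneously produces the binomial coefficient $\binom{n-1}{k}$ in \emph{both} models, resolves the would-be contradiction with Theorem~\ref{thm:MainBaxter} that you correctly flagged, and makes the factor of two transparent: it is precisely the free choice of last step in $\{E,\mathbf{0}\}$ for axis-walks versus the forced $\mathbf{0}$ for excursions --- which is close to the ``split by the last step'' heuristic you mention at the end, but you never connect it to a working decomposition.

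Your proposed fallback --- that the axis-walk count is $\sum_{k=0}^{n}\binom{n}{k}B_{k+1}$ and that this should equal twice $\sum_{k=0}^{n-1}\binom{n-1}{k}B_{k+1}$ --- is false: for $n=1$ the first sum is $B_1+B_2=3$ while twice the second is $2B_1=2$, and indeed there are exactly $2$ vacillating axis-walks of half-length $1$ in the octant (namely $\mathbf{0}E$ and $\mathbf{0}\mathbf{0}$) and $1$ vacillating excursion. So the index bookkeeping cannot be repaired algebraically from your decomposition; the decomposition itself has to change as described above. Once it does, the counting formula $\sum_{k=0}^{n-1}\binom{n-1}{k}B_{k+1}$ follows exactly as you intend from $e_k=B_{k+1}$.
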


\begin{proof} First, we claim that the vacillating axis-walks of half-length $n$ in the quadrant (resp. in the octant) are in bijection with the triples $(hw,P,laststep)$ where:
\begin{itemize}
\item $hw$ is a hesitating axis-walk of half-length $k \leq n - 1$ in the quadrant (resp. in the octant);
\item $P$ is a multiset of  size $n-1 - k$ and 
with elements in $\{0,\dots,k\}$;
\item $laststep$ is either $E$ or $\mathbf 0$.  
\end{itemize}
Indeed, removing the first step and the last step from a vacillating walk $w$ of half-length $n$ induces a hesitating walk of half-length $n-1$ where we allow some consecutive steps $s_{2i-1}$ and $s_{2i}$ to be
zero. Thus, to obtain a valid hesitating walk $hw$, we have to remove the consecutive double-zero steps, and store them in a multiset $P$, which describes the positions where we have to insert back the double-zero steps to recover the original vacillating walk. 
To recover the original vacillating walk we also have to put back the first step, which has to be $\mathbf 0$, because a walk starting at the origin cannot begin by a negative step, and the last step, which can be $E$ or $\mathbf 0$, but not $N$ because the walk ends on the axis.

If we restrict the axis-walk to be an excursion, then under the previous bijection, $hw$ is also an excursion, and $laststep$ is necessarily $\mathbf 0$ (hence the factor $2$).

We now use the bijection of Theorem~\ref{thm:MainBaxter}, which matches the hesitating  axis-walks in the first octant and the hesitating excursions in the first quadrant. Translated in terms of vacillating walks via the above bijection, it means that the vacillating axis-walks of the first octant are in bijection with the pairs formed by a vacillating excursion in the first quadrant, and a step in $\{E,\mathbf 0\}$. 

As for the counting formula for vacillating excursions of half-length $n$ in the first quadrant, it directly follows from the fact that hesitating excursions of half-length $k$ in the first quadrant are counted by $B_{k+1}$. 
\end{proof}

\section{Alternative Proofs Using Schnyder Woods}
\label{sec:SchWd}

In this section, we propose some alternative proofs of Theorem~\ref{thm:eliz} and Claim~\ref{clm:inv} by using as an intermediate step a famous combinatorial family: the \textit{Schnyder woods}.

\subsection{Schnyder woods and pairs of non-crossing Dyck paths}

A \textit{rooted triangulation} is the embedding of a simple planar
graph in the plane such that all faces are triangles, with a
distinguished \textit{external face} and a distinguished \textit{root
  vertex} on the external face. We are interested in a particular kind
of edge-coloring and orientation of triangulations, known as \textit{Schnyder
  woods}. One of them is depicted by Figure~\ref{fig:exSWOsc}(d). Schnyder woods were introduced by Schnyder in~\cite{Schn89}
and \cite{Schn90} for triangulations and were later extended to
different families of maps. There are several applications of these colorings, ranging
from graph drawing to map encoding.

We describe a Schnyder wood using some conventions of the
triangulation. The \textit{size} of a triangulation is the number of
vertices minus 3 (we do not count the vertices of the external
face). The root vertex is labeled~$v_0$, and the two other vertices
in counterclockwise order around the external face are labeled~$v_1$
and $v_2$. The vertices that are not incident to the external face and
edges that do not bound the external face are called
\textit{internal}.

A Schnyder wood of a simple triangulation is an edge-coloring into
$3$ colors, along with an orientation of all internal edges, such that
the following properties are satisfied:
\begin{itemize}
\item for each $i \in \{0,1,2\}$, the set of $i$-colored edges forms a
  spanning tree $T_i$, rooted and oriented towards $v_i$;
\item if, for $i \in \{0,1,2\}$, an $i$-tail (resp. $i$-head) with
  respect to a vertex denotes an edge colored by $i$ oriented away
  from (resp. toward) this vertex, then in clockwise order around any
  internal vertex, there are: one $0$-tail, some $1$-heads, one
  $2$-tail, some $0$-heads, one $1$-tail, some $2$-heads.
 \end{itemize}

Schnyder woods are in bijective correspondence with several
combinatorial families. A pair $P,Q$ of Dyck paths of length $2n$
(both starting at $(0,0)$ and ending at $(2n,0)$) is called
\emph{non-crossing} if for each $0\leq i\leq 2n$, the height of $Q$ after $i$ steps is at most the height of $P$ after $i$ steps.

\begin{theorem}[Bonichon~\cite{Boni05}]
 \label{thm:BeBo}
 Non-crossing pairs of Dyck paths of length~$2n$ are in bijection with Schnyder woods of size $n$.
\end{theorem}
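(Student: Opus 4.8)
The plan is to build Bonichon's bijection explicitly and then verify invertibility. I would begin with the three-tree description of a Schnyder wood furnished by the local condition: since every internal vertex emits exactly one edge of each color, the $i$-colored edges form a tree $T_i$ rooted at $v_i$ spanning $v_i$ together with all $n$ internal vertices. In particular $T_0$ has $n$ edges, so its contour (a clockwise depth-first traversal starting at $v_0$, traversing each edge once downward and once upward) consists of exactly $2n$ elementary moves --- matching the length $2n$ of the target paths. I would use this contour to fix a canonical order in which the internal vertices, and the edges of $T_1$ and $T_2$, are read.

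First I would define two lattice paths $P$ and $Q$ by scanning this contour. At each internal vertex the incident $1$- and $2$-colored edges occur in the prescribed angular sectors of the clockwise Schnyder pattern (the $1$-heads lying between the $0$-tail and the $2$-tail, the $2$-heads after the $0$-heads); the rule records the opening of a $1$-subtree or of a $2$-subtree as an up-step and its closure as a down-step, with color $1$ feeding one path and color $2$ the other. By construction each path has $n$ up-steps and $n$ down-steps.

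Next I would show that $P$ and $Q$ are honest Dyck paths and that $P\leq Q$. Nonnegativity of each prefix is a balance argument: along any initial segment of the $T_0$-contour, a head of color $i$ can be closed only after the tail opening its $T_i$-subtree has been read, because the cyclic Schnyder condition makes the nesting order of $T_i$ compatible with the contour order of $T_0$. The non-crossing inequality $P\leq Q$ is the crux, and I expect it to be the main obstacle: it is forced not by the tree structure alone but by the fixed clockwise pattern around every vertex, which makes the cumulative quantity governing $Q$ dominate the one governing $P$ at each point of the traversal. Here the planarity of the embedding, encoded in the Schnyder angular rule, is essential.

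Finally, to complete the bijection I would describe and justify the inverse. Given a non-crossing pair $(P,Q)$, one recovers $T_0$ from the shared skeleton of the traversal and then reattaches the $1$- and $2$-colored edges greedily, the Schnyder local rule dictating uniquely which opening matches which closing; the hypothesis $P\leq Q$ guarantees that this greedy matching never fails and yields a planar, correctly colored and oriented triangulation. The last task is routine but necessary: check that the two constructions are mutually inverse and that the reconstructed object satisfies all the Schnyder axioms, which finishes the proof.
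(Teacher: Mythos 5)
The paper does not actually prove this statement: it is quoted from Bonichon, and the surrounding text only describes the forward map $\Psi$ in the Bernardi--Bonichon formulation (the lower path is the contour word $\Omega(T_0)$ of the tree $T_0$; the upper path is $UD^{\beta_2}\cdots UD^{\beta_{n+1}}$, where $\beta_i$ counts the $1$-heads at the $i$th vertex in contour order), deferring positivity, the non-crossing property, and the construction of the inverse $\Phi$ to~\cite{BeBo09}. So the relevant comparison is with that construction, and your proposal deviates from it in a way that creates a genuine gap.

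First, your encoding feeds color $1$ into one path and color $2$ into the other, both read along the contour of $T_0$. Then neither path records $T_0$ itself, and your inverse step (``one recovers $T_0$ from the shared skeleton of the traversal'') is circular: the reading order \emph{is} the $T_0$-contour, which is exactly the data you no longer have. In the actual bijection the lower path is $\Omega(T_0)$, so the skeleton is recovered for free, and the upper path only stores the $1$-head multiplicities in contour order; the matching of $1$-tails to $1$-heads is then forced, and the color-$2$ edges are not encoded at all (they are the complement). Second, the two statements that carry all the content of the theorem --- that $P$ stays weakly below $Q$, and that the greedy reconstruction applied to an arbitrary non-crossing pair always terminates in a valid Schnyder wood of a simple triangulation (in particular that the color-$2$ edges again form a tree and the angular condition holds at every internal vertex) --- are precisely the points you label ``the crux'' and ``routine but necessary'' without supplying an argument. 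Saying they are ``forced by the fixed clockwise pattern'' restates what must be proved rather than proving it; as written, the proposal is an outline whose hard steps are missing.
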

Theorem~\ref{thm:BeBo} was first proved by Bonichon in \cite{Boni05}.
Bernardi and Bonichon improved the description of this bijection a few
years later~\cite{BeBo09}. It is the latter form that we use
here. We give a quick description of the map~$\Psi$ from Schnyder
woods to pairs of non-crossing Dyck paths -- see (c) and (d) from 
Figure \ref{fig:exSWOsc} for an example of the bijection. First, remember the
classical bijection~$\Omega$ 
between plane trees and Dyck paths: take
a plane tree, turn around it clockwise, starting and ending at the
root; the first time an edge is visited, write an up-step, the second
time, write a down step.  From a Schnyder wood, we now generate a pair
$(P,Q)$ of non-crossing Dyck paths. The bottom path
$Q=UD^{\alpha_1}\ldots UD^{\alpha_n}$, where $U$ stands for up-step and
$D$ stands for down-step, is the path representing the
tree $T_0$ of color $0$: $Q=\Omega(T_0)$. The tour around $T_0$
induces an order on the internal vertices, that we subsequently call
$u_1,...,u_n$ (the first vertex visited by the tour is $v_0$). Let
$\beta_i$ be the number of $1$-heads incident to $u_i$ and let
$\beta_{n+1}$ be the number of 1-heads incident to $v_1$. Note that
$\beta_1$ has to be $0$. The upper path is now defined as
$P=UD^{\beta_2}...UD^{\beta_{n+1}}$.  We refer the reader to Bernardi and Bonichon's paper \cite{BeBo09} for a
proof that $P$ is positive and does not cross $Q$, a description of
the reverse map $\Phi$, and a proof that $\Psi$ and $\Phi$ are
reciprocal bijections.

For our own purpose, we need to review how certain parameters are 
transformed by $\Psi$, as shown in Table \ref{tbl:paramBB}
where the first column corresponds to Schnyder woods, and the second
column corresponds to pairs of Dyck paths. The bijections $\Psi$ and
$\Phi$ map each parameter to its counterpart on the same row. In the
proof of Proposition~\ref{prop:involution}, we defined the \emph{upper
contacts} as common down-steps. Similarly, \emph{reversed upper
contacts} are  common up-steps. \emph{Upper} (resp. \emph{lower}) \emph{peaks} are peaks of the upper (resp. lower) path.

\begin{lemma}
 \label{lem:refBeBo}
 The bijection $\Phi$ satisfies the correspondence of parameters given by Table \ref{tbl:paramBB}.
\end{lemma}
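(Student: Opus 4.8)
The plan is to treat this as a bookkeeping lemma. Since $\Phi=\Psi^{-1}$ and the two maps send each parameter to its counterpart, it suffices to go through Table~\ref{tbl:paramBB} one row at a time and verify, using the explicit Bernardi--Bonichon description of $\Psi$ recalled just above, that each Schnyder-wood statistic in the left column is carried to the announced Dyck-path statistic in the right column. Throughout I would keep two governing facts in mind: the lower path is $Q=\Omega(T_0)$, so reading $Q$ left to right amounts to performing the clockwise tour of the $0$-tree $T_0$; and the upper path is $P=UD^{\beta_2}\cdots UD^{\beta_{n+1}}$, so that the up-step associated with index $i$ is immediately followed by exactly $\beta_i$ down-steps, where $\beta_i$ counts the $1$-heads at the $i$-th vertex $u_i$ in the $T_0$-tour order (with $\beta_{n+1}$ attached to $v_1$).

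For the rows that involve only one of the two paths, the verification is direct. For instance, an upper peak is a factor $UD$ of $P$, which occurs precisely once for each index $i$ with $\beta_i\geq 1$, i.e. for each internal vertex $u_i$ (or $v_1$) carrying at least one incident $1$-head; this reads off the peak statistic on the Schnyder-wood side. Likewise the lower bounces are the up-steps of $Q$ leaving height $0$, and under $\Omega$ these are exactly the edges of $T_0$ emanating from the root $v_0$, so they count the $0$-edges incident to $v_0$. Both of these rows can be settled simply by unwinding the definitions of $\Omega$ and of the $\beta_i$, with no further input.

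The delicate rows are those involving \emph{common} steps --- the upper bounces (common down-steps) and the reversed upper bounces (common up-steps) --- because these couple the two paths, and hence couple the tour of $T_0$ with the accumulation of $1$-heads. Here I would argue locally: a common step at a given abscissa means that $P$ and $Q$ move in the same direction at the same height, and by the non-crossing property this coincidence of heights pins down a specific relative position of the $T_0$-tour cursor and the partial count $\beta_1+\cdots+\beta_j$. The main work is to translate ``same direction, same height, same time'' into the intended local configuration at a vertex, using the clockwise Schnyder rule (one $0$-tail, some $1$-heads, one $2$-tail, some $0$-heads, one $1$-tail, some $2$-heads) to identify which color-and-orientation pattern is being counted. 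I expect this to be the only genuinely technical point; once this local dictionary between a coincidence of the two paths and the corresponding edge around a vertex is established, the remaining rows of Table~\ref{tbl:paramBB} follow by the same local analysis, and the lemma is complete. To keep the argument honest I would cross-check each claimed correspondence on the running example of Figure~\ref{fig:exSWOsc}, whose Schnyder wood and associated pair of Dyck paths are displayed there.
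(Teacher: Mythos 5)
Your overall structure matches the paper's proof: a row-by-row verification, with Rows 1, 2, 4 following from standard facts about $\Omega$, Row 3 from the observation that an upper peak is an index $i$ with $\beta_i>0$, and Row 5 from the definition of $\beta_{n+1}$. (One small slip: ``upper bounces'' do not actually appear in Table~\ref{tbl:paramBB}; only the \emph{reversed} upper bounces of Row 6 do, so there is exactly one delicate row, not two.)

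The genuine gap is in that delicate Row 6. You correctly reduce a common up-step to the coincidence ``same abscissa, same height'' for the $i$th up-steps of the two paths, which amounts to the identity $\sum_{j<i}\alpha_j=\sum_{j\leq i}\beta_j$; but you then assert that a ``local dictionary'' based on the clockwise edge pattern around $u_i$ will convert this into the statement that $u_i$ is a neighbor of $v_2$ in $T_2$, and that is precisely the step that cannot be done locally. Whether the $2$-tail $h_i$ at $u_i$ points directly to $v_2$ is a global property of the Schnyder wood: the paper's argument characterizes it as $h_i$ not being enclosed by the cycle formed by any $1$-colored edge together with $T_0$, and then counts, along the clockwise tour of $T_0$, the $1$-tails seen before $h_i$ (namely $\sum_{j<i}\alpha_j$, since each $1$-tail is crossed just after a $0$-edge is closed) against the $1$-heads seen before $h_i$ (namely $\sum_{j\leq i}\beta_j$), using the fact that every $1$-tail precedes its matching $1$-head in the tour. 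Without this enclosure criterion and the tail/head counting argument, the equivalence between the partial-sum identity and $\deg_{T_2}(v_2)$ is not established, so your proposal identifies the crux but does not supply the idea that resolves it.
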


\begin{table}

 \begin{tabular}{l l l}
\toprule
  & Schnyder wood & pair of non-crossing Dyck paths\\
  \toprule  1&size & half-length \\  \midrule
    2&number of leaves of $T_0$ & number of lower peaks \\ \midrule
    3&number of internal nodes of $T_1$ & number of upper peaks \\  \midrule
    4&degree of $v_0$ in $T_0$ & number of lower contacts \\  \midrule
    5&degree of $v_1$ in $T_1$ & length of the last upper descent \\ \midrule
    6&degree of $v_2$ in $T_2$ & number of reversed upper contacts    
     \\ \bottomrule
 \end{tabular}
\smallskip

 \caption{\label{tbl:paramBB} The correspondence of parameters through the  bijections $\Psi$ and $\Phi$.  }
\end{table}

\begin{proof}
  This proof uses references and notation from Bernardi and Bonichon's paper ~\cite{BeBo09}. Row~1 is
  trivial, and Rows~2 and 4 are well-known properties on the bijection
  between Dyck paths and trees. Row~5 is a direct consequence of the
  construction of the bijection, and Row~3 is a direct consequence of
  the fact that an upper peak is a descent $i$ of positive length
  $\beta_i>0$, which corresponds to an internal node of $T_1$.
 
  Row~6 is less straight-forward. It corresponds to the tight case
  in~\cite{BeBo09} when proving that the pair of walks is
  non-crossing; we sketch the main arguments.  Let $u_i$ be an
  internal vertex of $T$ (for $1\leq i\leq n$), and let $h_i$ be the
  tail of color $2$ at $u_i$.  Then one can check that $u_i$ is a
  neighbor of $v_2$ in $T_2$ if and only if $h_i$ is not \emph{below}
  a 1-arc (an arc of color $1$), i.e., there is no 1-arc $e$ such that $h_i$ is inside the
  (unique) cycle formed by $e$ and $T_0$.
  In such a situation, $h_i$ comes after the tail
  of $e$ and before the head of $e$ during a clockwise tour around
  $T_0$.  The number of 1-tails before $h_i$ in such a tour is
  $\sum_{j<i}\alpha_j$, while the number of 1-heads
  before $h_i$ is $\sum_{j\leq i}\beta_j$.
  Consequently, given that a 1-tail always comes in the tour before its corresponding 1-head, 
  $u_i$ is a neighbor of $v_2$ in $T_2$ if and only if
  the previous numbers coincide, i.e.,
  $\sum_{j<i}\alpha_j=\sum_{j\leq i}\beta_j$.  Finally, we remark that
  $-i+\sum_{j<i}\alpha_j$ (resp. $-i+\sum_{j\leq i}\beta_j$) gives the
  height of the $i$th up-step of the lower (resp. upper) Dyck path;
  hence the two numbers match if and only if the $i$th up-steps of the lower and
  upper Dyck paths form a reversed upper contact.
\end{proof}

\subsection{Consequences}

From Lemma~\ref{lem:refBeBo}, we can prove several non-trivial
properties on non-crossing pairs of Dyck paths, and subsequently on
excursions in the octant. Schnyder woods have
more evident symmetries than pairs of Dyck paths, and the very
expression of these symmetries gives involutions that are not easily
phrased in terms of pairs of Dyck paths.

We give an alternative proof of Theorem~\ref{thm:eliz} (see
Figure~\ref{fig:exSWOsc} for an outline of the bijection).

\begin{figure}
 \includegraphics[width=.9\textwidth]{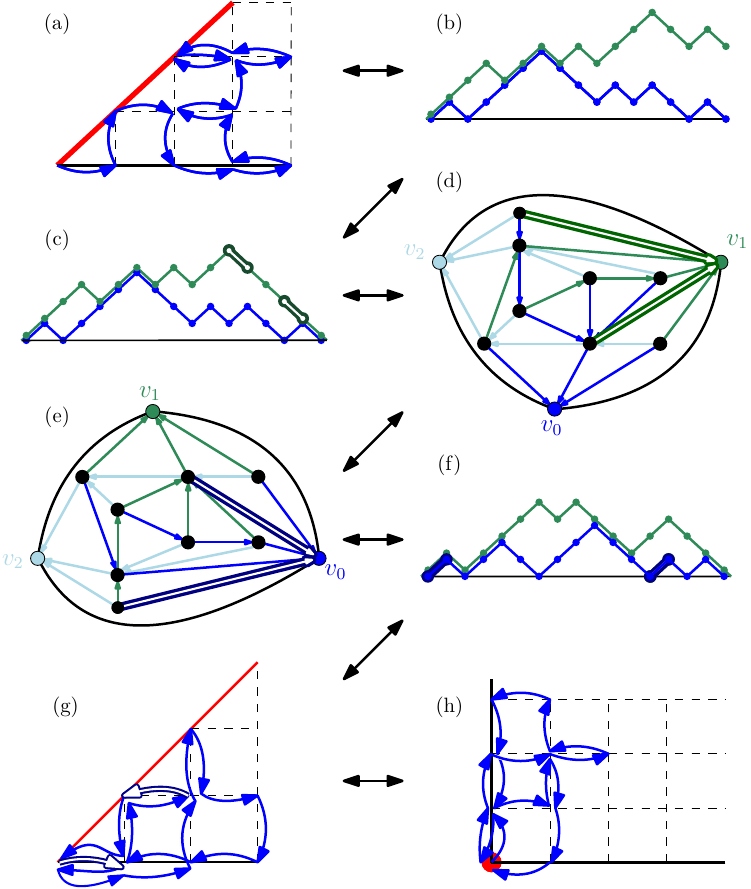}
 \caption{Illustration of the bijective proof of
   Theorem~\ref{thm:eliz} using Schnyder woods. Successive objects are: (a)
a simple walk ending on the diagonal and staying in the octant; 
a non-crossing pair formed by a Dyck path and a non-negative walk starting at $(0,0)$;
(c)
 a non-crossing pair of Dyck paths where some steps on
   the last descent of the upper path are marked;
    (d) and (e) two Schnyder woods (the roles of $T_0$ and $T_1$ have been swapped thanks to a vertical reflection);
   (f) a non-crossing pair of Dyck paths where some up-steps of the lower
   path that leave the $x$-axis are marked;
   (g) a simple excursion in the quadrant where some steps leaving the diagonal are marked;
   (h) a simple  excursion staying in the quadrant.}
 \label{fig:exSWOsc}
\end{figure}

\begin{proof}[Proof of Theorem~\ref{thm:eliz}]
We are going to describe the bijection from excursions to walks ending at the diagonal.

First, we use Lemma~\ref{lem:mirror} to map an excursion in the quadrant to an excursion in the octant with $k$ marked steps leaving the diagonal. This excursion is then mapped to a pair of non-crossing Dyck paths with $k$ marked lower contacts. 
 
We apply $\Phi$, move the root from $v_0$ to $v_1$ and change the orientation (meaning that ``clockwise'' becomes ``counterclockwise'').  Moving the root and changing the orientation amounts to exchanging the roles of $T_0$ and $T_1$. Hence, according to Lemma~\ref{lem:refBeBo} (rows 4 and 5), when we then apply $\Psi$ to get back to a pair of non-crossing Dyck paths, we get $k$ marked steps on the descent of the upper path.  
 
We reverse these $k$ steps, and the upper path now ends at height $2k$. We map the pair of paths back to a walk in octant that ends at coordinates $(k,k)$.
\end{proof}

A similar strategy can be applied to prove the next theorem, which is a stronger version of Claim~\ref{clm:inv} (that was used to complete the proof of Theorem \ref{thm:MainBaxter}), but still weaker than the result from \cite{ElRu16}, which preserves the upper path.

\begin{theorem}
 \label{thm:SchHesSym}
 There is an explicit involution on pairs of non-crossing Dyck paths of length
  $2n$, which preserves the number of upper peaks and exchanges the number of
  upper contacts and the number of lower contacts.
\end{theorem}

\begin{proof}
 We take a pair $(P,Q)$ of non-crossing Dyck paths, reverse them to
 transform upper contacts into reversed upper contacts, while keeping
 the same number of peaks and lower contacts, and apply $\Phi$ to get a
 Schnyder wood. 
 
 Then we move the root of the map from $v_0$ to $v_2$, and
 flip the orientation of the plane. This has the effect of exchanging the roles 
 played by $T_0$ and $T_2$.
 
 Finally we apply $\Psi$ to get back to a
 pair of non-crossing Dyck paths, and reverse again the two paths. 
 Lemma~\ref{lem:refBeBo} (rows 3, 4 and 6) is enough to conclude the proof.
\end{proof}

\subsection{An extension of the Narayana symmetry}
The Narayana number $\mathrm{N}(n,p)$ is defined as the number of Dyck
paths of length $2n$ with $p$ peaks. These numbers refine the
Catalan numbers $\mathrm{Cat}_n$, in the sense that
$\sum^{n}_{p=1}\mathrm{N}(n,p)=\mathrm{Cat}_n$. The following
statement is well-known.

\begin{property}
 \label{prop:Nara}
 The Narayana numbers satisfy the following symmetry property
 \[\mathrm{N}(n,p)=\mathrm{N}(n,n-p+1).\]
\end{property}

\begin{figure}
 \includegraphics[width=\textwidth]{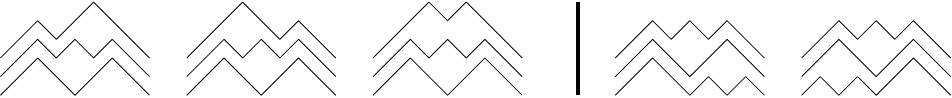}
 \caption{A minimal counterexample to the conjectural identity $\mathrm{N}(n,p_1,...p_k)=\mathrm{N}(n,n-p_k+1,...n-p_1+1)$
(which holds for $k\in\{1,2\}$).  
We have $3=\mathrm{N}(4,2,3,2)\neq\mathrm{N}(4,3,2,3)=2$.}
 \label{fig:ctex}
\end{figure}

The symmetry can be obtained from a classical bijection between Dyck
paths of length $2n$ and rooted binary trees with $n+1$ leaves: the
number of peaks of the Dyck path is mapped to the number of left
leaves, and the symmetry follows by applying a reflexion to the tree.

The Narayana numbers can be extended to any $k$-tuple of non-crossing
Dyck paths in the following way: $\mathrm{N}(n,p_1,...,p_k)$ is the
number of non-crossing $k$-tuples $D_1,\ldots,D_k$ of Dyck paths
(ordered from bottom to top) such that $D_i$ has $p_i$ peaks.

\begin{theorem}
 \label{thm:Nara2}
For $k=2$, the extended Narayana numbers satisfy the symmetry property
 \[
\mathrm{N}(n,p,q)=\mathrm{N}(n,n-q+1,n-p+1).
\]
\end{theorem}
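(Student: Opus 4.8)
The plan is to mimic the proofs of Theorem~\ref{thm:eliz} and Theorem~\ref{thm:SchHesSym}: transport the pair of Dyck paths to a Schnyder wood via $\Phi$, apply the symmetry of Schnyder woods that exchanges the roles of $T_0$ and $T_1$, and transport back via $\Psi$, reading off the new peak counts from Table~\ref{tbl:paramBB}. The point is that, exactly as in those proofs, a symmetry which is awkward to state on pairs of Dyck paths becomes transparent on the Schnyder wood side.

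First I would take a non-crossing pair of Dyck paths of length $2n$, with lower path carrying $p$ peaks and upper path carrying $q$ peaks, so that it is counted by $\mathrm{N}(n,p,q)$. Applying $\Phi$ yields a Schnyder wood of size $n$. By Lemma~\ref{lem:refBeBo} (rows~2 and~3 of Table~\ref{tbl:paramBB}), the number of lower peaks $p$ equals the number of leaves of $T_0$, while the number of upper peaks $q$ equals the number of internal nodes of $T_1$. Since each tree $T_i$ spans the $n$ internal vertices together with its root $v_i$, it has $n+1$ vertices and $n$ edges, so within a single tree the number of leaves and the number of internal nodes sum to $n+1$; hence $T_0$ has $n+1-p$ internal nodes and $T_1$ has $n+1-q$ leaves.

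Next I would apply the color-swapping symmetry already used in the proof of Theorem~\ref{thm:eliz}, namely moving the root from $v_0$ to $v_1$ and flipping the orientation of the plane, which exchanges the roles of $T_0$ and $T_1$. Applying $\Psi$ then returns a non-crossing pair of Dyck paths of the same length $2n$. Reading Table~\ref{tbl:paramBB} again, the number of lower peaks of the output equals the number of leaves of the new $T_0$, i.e. of the old $T_1$, which is $n+1-q=n-q+1$; and the number of upper peaks equals the number of internal nodes of the new $T_1$, i.e. of the old $T_0$, which is $n+1-p=n-p+1$. Thus the output is counted by $\mathrm{N}(n,n-q+1,n-p+1)$. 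Since exchanging $T_0$ and $T_1$ is an involution on Schnyder woods and $\Phi,\Psi$ are mutually inverse, the whole map is an involution, hence a bijection, establishing the identity.

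The main obstacle, which is really the only delicate point, is the parameter bookkeeping: one must track carefully that the table records the lower peaks via the \emph{leaves} of $T_0$ but the upper peaks via the \emph{internal nodes} of $T_1$. It is precisely this asymmetry, combined with the complementarity $(\text{leaves})+(\text{internal nodes})=n+1$ inside each tree, that produces the shift by one and the interchange of $p$ and $q$ in the formula, rather than a naive swap $\mathrm{N}(n,q,p)$. A minor verification is that the root contributes as an internal node, so that the complementarity holds with constant $n+1$, which is the case whenever $n\geq 1$; the case $n=0$ is trivial.
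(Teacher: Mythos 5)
Your proposal is correct and follows essentially the same route as the paper: apply $\Phi$, exchange the roles of $T_0$ and $T_1$ by moving the root from $v_0$ to $v_1$ and flipping the orientation, apply $\Psi$, and read off the peak counts from Table~\ref{tbl:paramBB}. Your explicit bookkeeping via the complementarity $(\text{leaves})+(\text{internal nodes})=n+1$ in each spanning tree is exactly the detail the paper leaves implicit when it invokes Lemma~\ref{lem:refBeBo}, and it correctly accounts for the shift by one and the interchange of $p$ and $q$.
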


\begin{proof}
 The method we use is similar both to the case of classical Narayana numbers and to the previous subsection.   
Starting from a non-intersecting pair of Dyck paths, we apply $\Phi$,  move the root from $v_0$ to $v_1$, change the orientation (similarly as before, this amounts to exchanging the roles of $T_0$ and $T_1$), and apply 
$\Psi$ back to a pair of paths. This yields an involution on
non-crossing pairs of Dyck paths that has the desired peak-parameter
correspondence, according to Lemma~\ref{lem:refBeBo} (rows 2 and 3). 
\end{proof}

However, a similar symmetry does not seem to hold for higher values of $k$. For example, one could expect that $\mathrm{N}(n,p_1,\dots,p_k)=\mathrm{N}(n,n-p_k+1,\dots,n-p_1+1)$, but we present a minimal counterexample to that in Figure~\ref{fig:ctex}.

\section{A new bijection for Young tableaux of even-bounded height}
\label{sec:young}
As we have seen in Section~\ref{sbs:mainOsc}, the main step in the
proof of Theorem~\ref{thm:MainSimple} is an explicit bijection between
simple axis-walks of length $n$ staying in the octant
$\{x\geq y\geq 0\}$, and simple walks of length $n$ from
$(\frac 1 2,\frac 1 2)$ to $(\frac 1 2,\frac {(-1)^n} 2)$ staying in
the tilted quadrant.  As it turns out, this bijection can be easily
generalized to any dimension, and infers new connections with standard Young tableaux with even-bounded height.

\subsection{Walks in higher dimensional Weyl Chambers}
For $k\geq 1$, we define the $k$-dimensional Weyl
chamber\footnote{For convenience we define the chambers using
  non-strict inequalities, our bijective statements can equiva\-lently   be given under strict inequalities, upon applying the
  coordinate shift $\widetilde{x}_i=x_i+k+1-i$.}  of type C as
\[W_C(k):=\left\{ (x_1,x_2,\dots,x_k)\ \ |\ \ x_1 \geq x_2 \geq \dots
  \geq x_{k} \geq 0\right\},\]
and the $k$-dimensional Weyl chamber of type D as
\[W_D(k):=\left\{ (x_1,x_2,\dots,x_k)\ \ |\ \ x_1 \geq x_2 \geq \dots
    \geq x_{k-1} \geq |x_k|\right\}.\]
  In this context, an \emph{axis-walk} is any walk starting
at the origin and ending on the $x_1$-axis. With these definitions,
the generalization of Theorem~\ref{thm:MainSimple} reads as follows.
 \begin{theorem}\label{theo:Wey}
   For~$k\geq 1$ and~$n\geq 0$, there is an explicit bijection between
   simple axis-walks of length $n$ staying in $W_C(k)$ and simple
  excursions of length $n$ staying in $W_D(k)$, starting from
   $(\frac 1 2,\dots,\frac 1 2,\frac 1 2)$, and ending at
   $(\frac 1 2,\dots,\frac 1 2,\frac {(-1)^n} 2)$. The ending
   $x_1$-coordinate of a walk from $W_C(k)$ corresponds to the number
   of steps that change the sign of $x_k$ in its bijective image.
\end{theorem}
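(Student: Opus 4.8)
The plan is to run, in dimension $k$, the same two-step strategy that yields the main bijection behind Theorem~\ref{thm:MainSimple}: the composition of Lemma~\ref{lem:oscRemOp} (removal of open arcs, producing a weighted excursion) with the first bijection of Lemma~\ref{lem:oscEnd} (unfolding the weights into sign changes). The structural observation that makes this work is that $W_D(k)$ is obtained from $W_C(k)$ by doubling across the hyperplane $\{x_k=0\}$: the reflection $\sigma\colon(x_1,\dots,x_{k-1},x_k)\mapsto(x_1,\dots,x_{k-1},-x_k)$ stabilizes $W_D(k)$, its fixed locus is $\{x_k=0\}$, and $W_D(k)\cap\{x_k\geq 0\}=W_C(k)$. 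Since $\sigma$ moves only the last coordinate, the planar argument, which reflects across $\{y=0\}=\{x_k=0\}$ with the first coordinate inert, should generalize with $x_1,\dots,x_{k-1}$ playing the role of passive bystanders.

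\textbf{Step 1 (removal of open arcs).} I would invoke the full-generality form of the bijections of Chen et al.\ and Burrill et al.\ recalled in the introduction and in Section~\ref{sec:def}: open arc diagrams of size $n$ with $m$ open arcs and no $(k{+}1)$-crossing correspond to simple axis-walks of length $n$ in $W_C(k)$ ending at $(m,0,\dots,0)$, and closed diagrams with no $(k{+}1)$-crossing correspond to excursions in $W_C(k)$. Exactly as in the proof of Lemma~\ref{lem:oscRemOp}, I remove the $m$ open arcs and record, for each gap that contained open arcs, the number it contained as a positive weight, obtaining a closed diagram of size $n-m$ carrying weights of total sum $m$, hence an excursion of length $n-m$ in $W_C(k)$. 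To place the weights I need the $k$-dimensional analogue of Property~\ref{ppt:cddsy}: an open arc can be inserted at a gap without creating a $(k{+}1)$-crossing if and only if the last coordinate $x_k$ of the walk vanishes there. This holds because inserting an open arc at a gap crosses exactly the arcs spanning that gap, so it creates a $(k{+}1)$-crossing precisely when those spanning arcs already contain a $k$-crossing; under the Chen et al.\ bijection the maximal number of mutually crossing spanning arcs equals the number of nonzero coordinates at that position, so a $k$-crossing occurs exactly when all $k$ coordinates are nonzero, i.e.\ when $x_k>0$. Transferring the weights to the corresponding visits to $\{x_k=0\}$ yields a simple excursion of length $n-m$ in $W_C(k)$ whose visits to $\{x_k=0\}$ carry non-negative integer weights summing to $m$.

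\textbf{Step 2 (unfolding into sign changes).} I then unfold this weighted excursion across $\{x_k=0\}$ as in Lemma~\ref{lem:oscEnd}: traversing the excursion, at each visit to $\{x_k=0\}$ of weight $w$ I insert $w$ steps that change the sign of $x_k$, switching to the $\sigma$-reflected copy of $W_C(k)$ after each such crossing (the lower copy being drawn upside-down in $x_k$). Because $\sigma$ acts trivially on $x_1,\dots,x_{k-1}$, the resulting walk stays in $W_D(k)$, has length $(n-m)+m=n$, and has exactly $m$ steps that change the sign of $x_k$. After the same half-integer renormalization as in two dimensions (shifting each coordinate by $\frac{1}{2}$, so that $\{x_k=0\}$ separates lattice points), it runs from $(\frac{1}{2},\dots,\frac{1}{2})$ to $(\frac{1}{2},\dots,\frac{1}{2},\pm\frac{1}{2})$, the first $k-1$ coordinates returning to $\frac{1}{2}$ as they are unaffected by the reflection. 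For the sign of the last coordinate, note that a simple walk of length $n$ from the origin changes the coordinate sum by $\pm1$ at each step, so its endpoint sum $m$ satisfies $m\equiv n\pmod 2$; each inserted step flips the sign of $x_k$, so the final sign is $(-1)^m=(-1)^n$, giving the endpoint $(\frac{1}{2},\dots,\frac{1}{2},\frac{(-1)^n}{2})$. Composing Steps 1 and 2 gives the asserted bijection, and since the endpoint abscissa $m$ equals both the number of open arcs and the total weight, it equals the number of sign-changing steps, as claimed.

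\textbf{Main obstacle.} The delicate point is the $k$-dimensional analogue of Property~\ref{ppt:cddsy}: both the identification of insertable gaps with the visits to $\{x_k=0\}$, and the fact that inserting $m$ open arcs pushes the endpoint to $(m,0,\dots,0)$ rather than elsewhere on the boundary. In two dimensions this is Proposition~3 of~\cite{BuCoFuMeMi15}; in general it rests on the correspondence between the crossing depth of spanning arcs and the number of parts of the underlying shape, which must be quoted, or re-derived from the growth-diagram description, in full generality. By contrast the geometric unfolding of Step 2 and the parity bookkeeping are routine adaptations of the planar case, precisely because the reflection only ever touches the last coordinate.
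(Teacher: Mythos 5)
Your proposal follows exactly the route the paper takes: generalize Lemma~\ref{lem:oscRemOp} via the full $(k{+}1)$-crossing form of Theorem~\ref{thm:BuCoFuMeMi} together with the higher-dimensional analogue of Property~\ref{ppt:cddsy} (insertable gaps $\leftrightarrow$ visits to $\{x_k=0\}$), then unfold the weighted excursion across $\{x_k=0\}$ as in the first part of Lemma~\ref{lem:oscEnd}. The details you add (the parity argument $m\equiv n\pmod 2$ for the sign of the last coordinate, and the explicit flagging of the arc-insertion property as the point that must be imported from~\cite{BuCoFuMeMi15} or the growth-diagram description) are correct and consistent with the paper's own, more terse, justification.
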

Note that the case $k=1$ is precisely our
introductory example, and the case $k=2$ is the first
part of Lemma~\ref{lem:oscEnd}.    
The arguments to show Theorem~\ref{theo:Wey} 
are very similar to those in the proofs of
Lemma~\ref{lem:oscRemOp} and the first part of Lemma~\ref{lem:oscEnd}. They
use the general formulation of Theorem~\ref{thm:BuCoFuMeMi}
(specifically, the bijection between open matching diagrams without $(k+1)$-crossing and
simple axis-walks in $W_C(k)$), and the property that
the intervals where an open arc can be added (without creating a
$(k+1)$-crossing) correspond to the visits of the walk to the
hyperplane defined by $x_k=0$.


Moreover, it has been
recently shown~\cite{BuCoFuMeMi15, Krat16} that
the set of standard
Young tableaux of size $n$ with height at most $2k$ is in bijection with simple axis-walks of
length $n$ in $W_C(k)$, with the ending $x_1$-coordinate mapped to the
number of columns of odd length.  Composing this bijection with
Theorem~\ref{theo:Wey} infers the following result.

\begin{corollary}\label{theo:young}
  For $n,k\geq 1$, there is an explicit bijection between
  the  standard
Young tableaux of size $n$ with height at most $2k$, and the simple walks of length $n$ staying
  in $W_D(k)$, 
starting from $(\frac 1 2,\dots,\frac 1 2,\frac 1 2)$, and
  ending at $(\frac 1 2,\dots,\frac 1 2,\frac {(-1)^n} 2)$. The number
  of odd columns corresponds to the number of steps that change the sign
  of $x_k$.
\end{corollary}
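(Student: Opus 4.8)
The plan is to obtain the stated bijection purely by composition, since both of the maps it chains together are already available. First I would invoke the recently established bijection of~\cite{BuCoFuMeMi15, Krat16} between standard Young tableaux of size $n$ with height at most $d=2k$ and simple axis-walks of length $n$ in $W_C(k)$; as recalled just before the statement, under this correspondence the number of columns of odd length is sent to the ending $x_1$-coordinate of the axis-walk. I would then apply Theorem~\ref{theo:Wey}, which provides an explicit length-preserving bijection from simple axis-walks of length $n$ in $W_C(k)$ to simple walks of length $n$ in $W_D(k)$ starting at $(\frac12,\dots,\frac12)$ and ending at $(\frac12,\dots,\frac{(-1)^n}2)$, and under which the ending $x_1$-coordinate of the axis-walk becomes the number of steps that change the sign of $x_k$.

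Composing these two bijections immediately yields a bijection between the tableaux and the walks in $W_D(k)$ described in the statement. The only thing to verify is the parameter correspondence, which I would obtain by chaining the two individual ones: the number of odd columns equals the ending $x_1$-coordinate of the intermediate axis-walk (by the first bijection), which in turn equals the number of steps changing the sign of $x_k$ in the final walk (by Theorem~\ref{theo:Wey}). Since both maps preserve the common parameter $n$ (the size of the tableau and the length of each walk), the composite is well defined on objects of a common size and is length-preserving, as required.

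I do not expect a genuine obstacle here: all of the combinatorial substance is carried by Theorem~\ref{theo:Wey} and by the cited RSK-type bijection, so the corollary is really just an assembly step. The only points demanding care are bookkeeping ones: fixing the height bound to be exactly $d=2k$ so that the tableau bijection of~\cite{BuCoFuMeMi15, Krat16} applies in the form stated, and checking that the ending $x_1$-coordinate is genuinely the shared hinge parameter threaded through both maps, so that the two parameter statements compose cleanly. Both are immediate from the way the two bijections were set up.
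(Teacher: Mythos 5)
Your proposal is correct and matches the paper's own derivation exactly: the corollary is obtained by composing the cited bijection of~\cite{BuCoFuMeMi15, Krat16} between tableaux of height at most $2k$ and axis-walks in $W_C(k)$ with Theorem~\ref{theo:Wey}, chaining the parameter correspondences (odd columns $\to$ ending $x_1$-coordinate $\to$ sign changes of $x_k$). Your bookkeeping remark about fixing $d=2k$ is also apt, since the statement as written leaves $d$ implicit.
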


\subsection{Recovering Gessel's formula}
\label{sec:Gessel}
Thanks to the lattice path enumeration techniques of Grabiner and
Magyar~\cite{GrMa93}, the previous corollary has an interesting
consequence: a combinatorial interpretation of the determinant
expression of Gessel~\cite{Gess90} for the generating function of
standard Young tableaux of even-bounded height.

\begin{proposition}[Gessel~\cite{Gess90}]\label{prop:gessel}
Let $Y_d[n]$ be the number of Young tableaux of size $n$ with at most $d$ rows, and $Y_d(x)=\sum_{n\geq 0}\frac1{n!}Y_d[n]x^n$ the associated generating function. Then for each $k\geq 1$,
\begin{equation}\label{eq:gessel}
Y_{2k}(x)=\mathrm{det}\big(I_{i-j}(2x)+I_{i+j-1}(2x)\big)_{1\leq i,j\leq k},
\end{equation}
where (for $m\in\mathbb{Z}$) 
$I_m(2x)=\sum_{i\geq 0}\frac1{(m+i)!i!}x^{m+2i}$. 
\end{proposition}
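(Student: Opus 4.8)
The plan is to combine Corollary~\ref{theo:young} with the reflection-principle enumeration of Weyl-chamber walks due to Grabiner and Magyar~\cite{GrMa93}. By Corollary~\ref{theo:young}, $Y_{2k}[n]$ equals the number of simple walks of length $n$ staying in $W_D(k)$ that start at $o=(\frac12,\dots,\frac12)$ and end at $(\frac12,\dots,\frac12,\frac{(-1)^n}{2})$; hence $Y_{2k}(x)$ is the exponential generating function (EGF) of these walks, where summing over the two possible final coordinates $\pm\frac12$ is harmless because the length parity already selects the admissible one. So it suffices to compute this EGF and identify it with the determinant~\eqref{eq:gessel}.

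First I would record the free (unconstrained) count. A simple walk with steps $\pm e_1,\dots,\pm e_k$ is an interleaving of $k$ independent one-dimensional $\pm1$ walks, one per coordinate, and $I_m(2x)$ is precisely the EGF of one-dimensional $\pm1$ walks with net displacement $m$ (immediate from the given series $I_m(2x)=\sum_{i\ge0}\frac{x^{m+2i}}{(m+i)!\,i!}$, since a walk of length $m+2i$ with $m+i$ up-steps is counted with weight $\tfrac{1}{(m+i)!\,i!}$). By the product rule for EGFs, the EGF of free walks from $u$ to $v$ therefore factorizes as $\prod_{i=1}^k I_{v_i-u_i}(2x)$.

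Next I would invoke the reflection principle for the Weyl group of type $D_k$ (the step set $\{\pm e_i\}$ is invariant under it). To make it applicable one passes from the closed chamber, in which the walk may touch walls and in particular may cross the hyperplane $x_k=0$, to the open chamber by shifting by the type-$D$ Weyl vector $\rho=(k-1,k-2,\dots,1,0)$, which satisfies $\langle\rho,\alpha^\vee\rangle=1$ for every simple root, notably $e_{k-1}+e_k$. The crucial feature of this shift is its vanishing last coordinate: writing $\lambda:=o+\rho=(k-\tfrac12,\dots,\tfrac32,\tfrac12)$, the shifted start and the shifted even-length endpoint both equal $\lambda$, while the shifted odd-length endpoint equals $s_k\lambda$, where $s_k$ flips the last coordinate. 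The reflection principle then expresses the constrained EGF to a fixed shifted endpoint $\mu$ as $S(\mu)=\sum\det(w)\prod_i I_{(w\mu)_i-\lambda_i}(2x)$, the sum running over signed permutations $w$ with an even number of sign changes, so that $Y_{2k}(x)=S(\lambda)+S(s_k\lambda)$.

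Finally I would collapse this signed sum into one determinant. Writing $w=(\sigma,\eta)$ with $\eta\in\{\pm1\}^k$ and imposing $\prod_i\eta_i=+1$ (the type-$D$ condition), expanding $\prod_i\big(I_{\mu_{\sigma(i)}-\lambda_i}+I_{\mu_{\sigma(i)}+\lambda_i}\big)$ and using $I_{-m}=I_m$ shows that $S(\lambda)+S(s_k\lambda)=\det\big(I_{\lambda_i-\lambda_j}(2x)+I_{\lambda_i+\lambda_j}(2x)\big)_{1\le i,j\le k}$; substituting $\lambda_i=k+\tfrac12-i$ gives entries $I_{i-j}(2x)+I_{2k+1-i-j}(2x)$, and reversing the order of both rows and columns (which leaves the determinant unchanged) turns these into $I_{i-j}(2x)+I_{i+j-1}(2x)$, giving~\eqref{eq:gessel}. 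I expect this reflection step to be the main obstacle: one must check that the walk genuinely lives in the \emph{closed} type-$D$ chamber (so that $x_k$ may change sign) and that the relevant shift is the type-$D$ Weyl vector $\rho=(k-1,\dots,1,0)$ rather than the type-$C$ shift $(k,\dots,1)$ — it is exactly the vanishing last coordinate of $\rho$ that makes the two parity endpoints $s_k$-related and hence fuses the endpoint sum into a single determinant. Tracking the even-sign-change condition through this fusion, so that one lands on the \emph{plus} combination $I_{i-j}+I_{i+j-1}$ and not on the \emph{minus} combination (which is the type-$B$/type-$C$ expression governing odd-bounded height), is the delicate point.
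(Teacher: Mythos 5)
Your proposal is correct and follows essentially the same route as the paper: both reduce Gessel's formula to Corollary~\ref{theo:young} together with the type-$D$ reflection-principle determinant for Weyl-chamber walks, the only difference being that you re-derive that determinant explicitly from the Gessel--Zeilberger principle~\cite{GeZe92} (correctly handling the shift to the open chamber, the fusion of the two parity endpoints via $s_k$, and the collapse of the signed sum over the hyperoctahedral group into $\det\bigl(I_{\lambda_i-\lambda_j}+I_{\lambda_i+\lambda_j}\bigr)$), whereas the paper simply cites this as Lemma~\ref{lem:gen_funct} from Grabiner and Magyar~\cite{GrMa93} and applies it with $\lambda=\mu=\rho=(1/2,3/2,\dots,k-1/2)$.
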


  
Let us now explain how we can recover this result from 
Corollary~\ref{theo:young}. 
First, it proves to be convenient to take here the Weyl chamber of type D under
the form \[\wtW_D(k):=\{|x_1|<x_2<\cdots<x_k\}.\] 
For each point $\lambda=(\lambda_1,\ldots,\lambda_k)$ (in $\mathbb{R}^k$) we denote by $\lambda'$ the point $(-\lambda_1,\lambda_2,\ldots,\lambda_k)$. 
Let $\rho=(1/2,3/2,\ldots,k-1/2)$.
Then Corollary~\ref{theo:young}
states that for $n$ even (resp. odd), 
$Y_{2k}[n]$ is the number of walks of length $n$ in 
$\wtW_D(k)$ from $\rho$ to $\rho$ (resp. to $\rho'$). 

For every points $\lambda=(\lambda_1,\ldots,\lambda_k)$ and
$\mu=(\mu_1,\ldots,\mu_k)$ both in $\wtW_D(k)$, let $N_{\lambda,\mu}[n]$ be the number
of simple walks of length $n$ from $\lambda$ to $\mu$ staying in
$\wtW_D(k)$, and let $N_{\lambda,\mu}(x):=\sum_{n\geq
  0}\frac1{n!}N_{\lambda,\mu}[n]x^n$ be the associated generating
function.

\begin{lemma}[Grabiner and
Magyar~\cite{GrMa93}]\label{lem:gen_funct}
For every points $\lambda=(\lambda_1,\ldots,\lambda_k)$
and $\mu=(\mu_1,\ldots,\mu_k)$ in $\wtW_D(k)$ such that $\lambda_i$ and $\mu_i$ belong to $1/2 + \mathbb Z$ for  $i \in \{1,\dots,k\}$, we have 
$$
N_{\lambda,\mu}(x)+N_{\lambda,\mu'}(x)=\mathrm{det}\big(I_{\lambda_i-\mu_j}(2x)+I_{\lambda_i+\mu_j}(2x)\big)_{1\leq i,j\leq k},
$$
where, for $m\in\mathbb{Z}$, $I_m(2x)$ is defined as 
$I_m(2x)=\sum_{i\geq 0}\frac1{(m+i)!i!}x^{m+2i}$. 
\end{lemma}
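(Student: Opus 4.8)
The plan is to combine a factorization of the free (unconstrained) walk generating function with the reflection principle of Grabiner and Magyar for the type~$D$ reflection group, and then to reassemble the resulting group sum into a determinant by a short sign computation.

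\textbf{Free walks.} First I would compute the exponential generating function of simple walks in $\mathbb{Z}^k$ with no chamber constraint. Since each step moves a single coordinate by $\pm 1$, a free walk of length $n$ from $\lambda$ to $\mu$ is equivalent to the data of which coordinate moves at each of the $n$ times (a shuffle, counted by the multinomial $\binom{n}{n_1,\dots,n_k}$ where $n_i$ is the number of moves of coordinate $i$) together with an independent one-dimensional $\pm1$ walk in each coordinate. The multinomial factor is exactly cancelled by the $1/n!$ normalization, so the generating function factorizes over coordinates; since the EGF of a one-dimensional $\pm1$ walk from $a$ to $b$ is precisely $I_{b-a}(2x)$ (the defining series of $I_m$), the free-walk EGF from $\lambda$ to $\mu$ equals $\prod_{i=1}^k I_{\mu_i-\lambda_i}(2x)$. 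The hypothesis $\lambda_i,\mu_i\in \tfrac12+\mathbb{Z}$ ensures that all indices $\mu_i-\lambda_i$, and later $\lambda_i\pm\mu_j$, are integers, so these Bessel series are well defined.

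\textbf{Reflection principle.} The chamber $\wtW_D(k)$ is the fundamental domain of the Weyl group $W_D(k)$ of signed permutations with an even number of sign changes, the step set $\{\pm e_i\}$ is $W_D(k)$-invariant, and simple walks are reflectable across each wall $x_i=\pm x_j$ (a single step cannot cross such a wall without landing on it). Thus I would invoke the reflection-principle formula of~\cite{GrMa93}, $N_{\lambda,\mu}[n]=\sum_{w\in W_D(k)}\det(w)\,F_{\lambda,w\mu}[n]$ with $F$ the free-walk count, and pass to EGFs. Writing each $w$ as a pair $(\tau,\delta)$ with $(w\mu)_i=\delta_i\mu_{\tau(i)}$ and $\det(w)=\operatorname{sgn}(\tau)\prod_i\delta_i$ (so the type-$D$ condition reads $\prod_i\delta_i=1$, whence $\det(w)=\operatorname{sgn}(\tau)$), and using $I_m=I_{-m}$, this gives
\[
N_{\lambda,\mu}(x)=\sum_{\tau\in S_k}\operatorname{sgn}(\tau)\sum_{\delta:\,\prod_i\delta_i=1}\ \prod_{i=1}^k I_{\lambda_i-\delta_i\mu_{\tau(i)}}(2x).
\]

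\textbf{Completing to the determinant.} The role of the extra term $N_{\lambda,\mu'}$, with $\mu'=(-\mu_1,\mu_2,\dots,\mu_k)$, is exactly to supply the missing odd-sign-change contributions. Replacing $\mu$ by $\mu'$ negates $\mu_1$, i.e.\ it flips the sign attached to whichever coordinate $\tau$ sends to position~$1$; reindexing by absorbing this flip into $\delta$ shows that $N_{\lambda,\mu'}(x)$ is the same expression as above but subject to $\prod_i\delta_i=-1$. Adding the two removes the constraint on $\delta$, and since summing each $\delta_i$ over $\{\pm1\}$ turns the $i$th factor into $I_{\lambda_i-\mu_{\tau(i)}}+I_{\lambda_i+\mu_{\tau(i)}}$, the total is the Leibniz expansion of $\det\bigl(I_{\lambda_i-\mu_j}(2x)+I_{\lambda_i+\mu_j}(2x)\bigr)_{1\le i,j\le k}$, which is the claim.

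\textbf{Main obstacle.} The technical heart is the reflection principle itself --- justifying reflectability of simple walks across the type-$D$ walls and pinning down the sign character $\det(w)$ --- but as the lemma is credited to~\cite{GrMa93} this can be cited rather than reproved. The genuinely load-bearing step is the last one: verifying that the even-sign-change constraint of $W_D(k)$, together with the reflection $\mu\mapsto\mu'$, reassembles correctly (with all signs matching) into the full hyperoctahedral sum and hence into the determinant.
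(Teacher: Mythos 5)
Your proposal is correct and follows exactly the route the paper attributes to Grabiner--Magyar: the paper does not reprove the lemma but merely notes that it follows from the Gessel--Zeilberger reflection principle for reflectable Weyl chamber walks, with the free-walk generating function factoring into the one-dimensional series $I_m(2x)$. Your write-up simply fills in the details of that same argument, and in particular your final step (adding $N_{\lambda,\mu'}$ to lift the even-sign-change constraint of $W_D(k)$ and reassemble the hyperoctahedral sum into $\det\bigl(I_{\lambda_i-\mu_j}+I_{\lambda_i+\mu_j}\bigr)$) is the correct accounting.
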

The proof techniques in~\cite{GrMa93} 
rely on a general reflexion principle
(see~\cite[Theorem 1]{GeZe92}) for Weyl chamber walks,
which results in determinant expressions for the 
relevant generating functions (the determinant
is naturally expressed in terms of $I_m(2x)$, which 
is the exponential generating function of simple 1d walks
that start at $0$ and end at $m$).   


By Corollary~\ref{theo:young} we have
\[Y_{2k}(x)=N_{\rho,\rho}(x)+N_{\rho,\rho'}(x),\]
where $N_{\rho,\rho}(x)$ gathers the coefficients of even power
and $N_{\rho,\rho'}(x)$ gathers the coefficients of odd power.   Hence, applying
Lemma~\ref{lem:gen_funct} to $\lambda=\mu=\rho$, we recover
Proposition~\ref{prop:gessel}.

\subsection{Related formulas}
For odd $d$, the expression for
the generating function $Y_{d}(x)$ found in Gessel's paper~\cite{Gess90} is 
\[
Y_{2k+1}(x)=e^x\ \!\mathrm{det}\big(I_{i-j}(2x)-I_{i+j}(2x)\big)_{1\leq i,j\leq k}. 
\] 
In that case the combinatorial derivation is easier.  Indeed, a
Young tableau with at most $2k+1$ rows is identified (via the
Robinson-Schensted correspondence) to an involutive permutation
without $(2k+2)$-decreasing subsequence, which itself maps to a
partial matching diagram without $(k+1)$-nesting (\textit{partial} here means
that there can be isolated points along the line). This implies that
\[
\sum_{n\geq 0}\frac1{n!}Y_{2k+1}(n)x^n=e^x\ \! M_k(x),
\]
where $M_k(x)$ is the exponential generating function for matching
diagrams without $(k+1)$-nesting. By the Chen \emph{et al.}
bijection~\cite{ChDeDuStYa07}, the series $M_k(x)$ is the exponential generating
function of simple walks in the Weyl chamber of type $C$ starting and
ending at the origin; and it is shown by Grabiner and
Magyar~\cite[Section 6.2]{GrMa93} that
this generating function is
$\mathrm{det}\big(I_{i-j}(2x)-I_{i+j}(2x)\big)_{1\leq i,j\leq k}$.

Similarly, a determinant formula is known for the enumeration of
\emph{pairs} of Young tableaux of bounded height.  More precisely, let
$u_d[n]$ be the number of pairs of Young tableaux of the same shape
with at most $d$ rows (also by the Robinson-Schensted correspondence, the number of permutations in
$\mathfrak{S}_n$ with no $(d+1)$-increasing subsequence). Then for every $d \geq 1$, we
have as shown by Gessel ~\cite{Gess90} (note that this time the expression is uniform in
$d$, with no dependence on the parity)
\[
\sum_{n\geq 0}\frac{u_d[n]}{n!^2}x^{2n}=\mathrm{det}\big(I_{i-j}(2x)\big)_{1\leq i,j\leq k}.
\]
A combinatorial proof of this expression has been given in~\cite{gessel1998lattice}
via simple walks ending at so-called Toeplitz points, see 
also~\cite{Xin10} 
for a combinatorial derivation based on arc diagrams.

Regarding asymptotic enumeration, it should in principle be possible
to use recent results by Denisov and Wachtel for the asymptotic
enumeration of walks in cones~\cite[Theo.~6]{denisov2015random} in order to recover
from Theorem~\ref{theo:young} the expression found by
Regev~\cite{regev1981asymptotic} for the asymptotic number of Young
tableaux of size $n$ with at most $2k$ rows, which is, for each fixed
$k\geq 1$,
\[Y_{2k}[n]\sim_{n\to\infty} (2/\pi)^{k/2}(2k)^n(k/n)^{k(k-1/2)}\prod_{i=0}^{k-1}(2i)!.\] 
(The relevant constants for the Weyl chamber of type D can be computed
from~\cite{konig2009random} and from Selberg
integrals~\cite[Eq. 1.20]{forrester2008importance}.)

\section{Acknowledgements} The authors thank Guillaume Chapuy
and Alejandro Morales for interesting discussions.  MM, JC and ML were
partially supported by NSERC Discovery Grant 31-611453, and EF
was partially supported by PIMS.

\bibliographystyle{plain}
\bibliography{main.bib}
\end{document}